\newcommand{\beq}{\begin{equation}}
\newcommand{\eeq}{\end{equation}}
\newcommand{\bea}{\begin{eqnarray}}
\newcommand{\eea}{\end{eqnarray}}
\newcommand{\beaa}{\begin{eqnarray*}}
\newcommand{\eeaa}{\end{eqnarray*}}
\newcommand{\n}{\noindent}
\newcommand{\q}{\quad}
\newcommand{\qq}{\qquad}
\newcommand{\E}{\rm I\!E}
\newcommand{\I}{\varphi}
\newcommand{\de}{\delta}
\newcommand{\De}{\Delta}
\newcommand{\al}{\alpha}
\newcommand{\la}{\lambda}
\newcommand{\f}{\infty}
\newcommand{\vs}{\varepsilon}
\newcommand{\cd}{\cdot}
\newcommand{\si}{\sigma}
\newcommand{\be}{\beta}
\newcommand{\Om}{\Omega}
\newcommand{\inl}{\int_{-\pi}^{\pi}}
\newcommand {\s} {\section}
\newcommand {\sn} {\subsection}
\newtheorem{thm}{Theorem}[section]
\newtheorem{lem}{Lemma}[section]
\newtheorem{pp}{Proposition}[section]
\newtheorem{cor}{Corollary}[section]
\newtheorem{exa}{Example}[section]
\newtheorem{rem}{Remark}[section]
\newtheorem{den}{Definition}[section]
\numberwithin{equation}{section}
\newtheorem*{TA}{Theorem A}
\newtheorem*{TB}{Theorem B}
\newtheorem*{TC}{Theorem C}
\begin{document}


\title{On asymptotic behavior of the prediction error for
a class of deterministic stationary sequences}

\author{N. M. Babayan\thanks{ Russian-Armenian University, Yerevan, Armenia,
e-mail: nmbabayan@gmail.com} \,
and M. S. Ginovyan\thanks{Boston University, Boston, USA, e-mail: ginovyan@math.bu.edu. Corresponding author.}}

\date{\today}

\maketitle

\begin{abstract}
\noindent
One of the main problem in prediction theory of 
stationary
processes $X(t)$ is to describe the asymptotic behavior of the best linear mean
squared prediction error in predicting $X(0)$ given $ X(t),$ $-n\le t\le-1$,
as $n$ goes to infinity.
This behavior depends on the regularity (deterministic or non-deterministic)
of the process $X(t)$.
In his seminal paper {\it 'Some purely deterministic processes' (J. of Math. and
Mech.,} {\bf 6}(6), 801-810, 1957), for a specific spectral density 
that has a very high order contact with zero M. Rosenblatt showed that
the prediction error behaves like a power as $n\to\f$.
In the paper Babayan et al. {\it 'Extensions of Rosenblatt's results on
the asymptotic behavior of the prediction error for deterministic stationary
sequences' (J. Time Ser. Anal.} {\bf 42}, 622-652, 2021), Rosenblatt's result was
extended to the class of spectral densities of the form $f=f_dg$,
where $f_d$ is the spectral density of a deterministic process that
has a very high order contact with zero, while $g$ is a function
that can have polynomial type singularities.
In this paper, we describe new extensions of the above quoted results
in the case where the function $g$ can have {\it arbitrary power type singularities}.
Examples illustrate the obtained results.
\end{abstract}

\vskip3mm
\noindent
{\bf Key words and phrases.} Prediction problem, deterministic stationary process,
singular spectral density, Rosenblatt's theorem, weakly varying sequence.

\vskip3mm
\noindent
{\bf 2010 Mathematics Subject Classification.} 60G10, 60G25, 62M15, 62M20.

\s{Introduction}
\label{Int}

Let $X(t),$ $t\in\mathbb{Z}: = \{0,\pm1,\ldots\}$, be a centered discrete-time
second-order stationary process. The process is assumed to have an absolutely
continuous spectrum with spectral density function $f(\la),$
$\la\in [-\pi, \pi].$ The {\it 'finite' linear prediction problem}
is as follows.

Suppose we observe a finite realization of the process $X(t)$:
$\{X(t), \,\, -n\le t\le-1\},$ $n\in\mathbb{N}: = \{1,2, \ldots\}.$
We want to make an one-step ahead prediction, that is, to predict the
unobserved random variable $X(0)$,  using the {\it linear predictor}
$Y=\sum_{k=1}^{n}c_kX(-k)$.

The coefficients
$c_k$, $k=1,2,\ldots,n$, are chosen so as to minimize
{\it the mean-squared error}: $\E\left|X(0) - Y\right|^2,$
where $\E[\cd]$ stands for the expectation operator.
If such minimizing constants $\hat c_k:=\hat c_{k,n}$ can be found,
then the random variable $\hat X_n(0):=\sum_{k=1}^{n}\hat c_kX(-k)$ is called
{\it the best linear one-step ahead predictor} of $X(0)$
based on the observed finite past: $X(-n), \ldots, X(-1)$.
The minimum mean-squared error:
$$\si_{n}^2(f): =\E\left|X(0) - \hat X_n(0)\right|^2\geq0
$$
is called {\it the best linear one-step ahead prediction error} of $X(t)$
based on the past of length $n$.

One of the main problem in prediction theory of second-order stationary
processes, called {\it the 'direct' prediction problem} is to describe the
asymptotic behavior of the prediction error $\sigma_n^2(f)$ as $n\to\f$.
This behavior depends on the regularity nature (deterministic or nondeterministic)
of the observed process $X(t)$.

Observe that $\sigma_{n+1}^2(f)\leq \sigma_n^2(f)$  ($n\in\mathbb{N}$),
and hence, the limit of $\sigma_n^2(f)$ as $n\to\f$ exists.
Denote by $\sigma^2(f): = \sigma_\infty^2(f)$ the prediction error
of $X(0)$ by the entire infinite past: $\{X(t)$, $t\le-1\}$.

From the prediction point of view it is natural to distinguish the class of
processes for which we have {\it error-free prediction} by the entire infinite past, that is,
$\si^2(f)=0$. Such processes are called {\it deterministic} or {\it singular}.
Processes for which $\si^2(f)>0$ are called {\it nondeterministic}
(for more about these terms see, e.g., Babayan et al. \cite{BGT},
and Grenander and Szeg\H{o} \cite{GS}, p.176).


Define the {\it relative prediction error} $\de_n(f): = \si^2_n(f) - \si^2(f),$
and observe that
$\delta_n(f)$ is non-negative and tends to zero as $n\to\infty$.
But what about the speed of convergence of $\delta_n(f)$ to zero as $n\to\infty$?
The paper deals with this question.
Specifically, the prediction problem we are interested in is
{\it to describe the rate of decrease of $\delta_n(f)$ to zero as}
$n \to \infty,$ depending on the regularity
nature (deterministic or nondeterministic) of the observed process $X(t)$.

The prediction problem stated above goes back to classical works of
A. Kolmogorov, G. Szeg\H{o} and N. Wiener. It was then considered by many authors
for different classes of nondeterministic processes
(see, e.g., the survey papers Bingham \cite{Bin1} and Ginovyan \cite{G-4},
and references therein).

We focus in this paper on deterministic processes, that is, when  $\si^2(f) =0$.
This case is not only of theoretical interest, but is also important from the point
of view of applications.
For example, as pointed out by Rosenblatt \cite{Ros} (see also Pierson \cite{Pl}),
situations of this type arise in Neumann's theoretical model of
storm-generated ocean waves. Such models are also of interest
in meteorology (see Fortus \cite{For}).

Only few works are devoted to the study of the speed of convergence of
$\de_n(f)=\si^2_n(f)$ to zero as $n\to\infty$, that is, the asymptotic behavior of
the prediction error for deterministic processes.
One needs to go back to the classical work of Rosenblatt \cite{Ros},
where the asymptotic behavior of the prediction error $\si^2_n(f)$
was investigated in the following two cases:

(a) the spectral density $f(\la)$ is continuous and positive on an
interval of the segment $[-\pi,\pi]$ and zero elsewhere,

(b) the spectral density $f(\la)$ has a very high order of contact with zero
at points $\la=0, \pm\pi$, and is strictly positive otherwise.

For the case (a) above, Rosenblatt \cite{Ros} proved
that the prediction error $\si^2_n(f)$ decreases to zero exponentially
as $n\to\f$.
Later the problem (a) was studied by  Babayan  \cite{Bb-1,Bb-2} and
Babayan et al. \cite{BGT} (see also Davisson \cite{Dav1} and Fortus \cite{For}),
where some extensions of Rosenblatt's result have been obtained.

Concerning the case (b) above, for a specific deterministic process $X(t)$, 
Rosenblatt proved in \cite{Ros} that
the prediction error $\si^2_n(f)$ decreases to zero {\it like a power} as $n\to\f$. 
More precisely, the deterministic process $X(t)$ considered in
Rosenblatt \cite{Ros} has the spectral density
\beq
\label{nd4}
f_a(\la):=\frac{e^{(2\la-\pi)\varphi(\la)}}{\cosh\left(\pi\varphi(\la)\right)},
\q f_a(-\la)=f_a(\la),\q 0\leq\la\leq\pi,
\eeq
where $\varphi(\la)=(a/2)\cot\la$ and $a$ is a positive parameter.

Using the technique of orthogonal polynomials on the unit circle
and Szeg\H{o}'s results, Rosenblatt \cite{Ros} proved the following theorem.
\begin{TA}[Rosenblatt \cite{Ros}]
\label{R2}
Suppose that the process $X(t)$ has spectral density $f_a$ given by \eqref{nd4}.
Then the following asymptotic relation for the prediction error $\si^2_n(f_a)$
holds:
\beq
\label{nd6}
\si^2_n(f_a)\sim\frac{\Gamma^2\left(({a+1)}/2\right)}
{\pi 2^{2-a}} \ n^{-a}
\q {\rm as} \q n\to\f.
\eeq
\end{TA}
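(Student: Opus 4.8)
The plan is to pass from the prediction error to Szeg\H{o}'s theory of orthogonal polynomials on the unit circle, and then to read off the power law from the analytic structure of $f_a$. First I would recast the finite prediction problem in the spectral domain: writing $X(t)=\int_{-\pi}^{\pi}e^{it\lambda}\,dZ(\lambda)$ with $\E|dZ(\lambda)|^{2}=f_a(\lambda)\,d\lambda$, the error takes the form
$$\sigma_n^2(f_a)=\min_{c_1,\dots,c_n}\frac{1}{2\pi}\int_{-\pi}^{\pi}\Big|1-\sum_{k=1}^{n}c_k e^{-ik\lambda}\Big|^{2} f_a(\lambda)\,d\lambda .$$
Multiplying the bracketed trigonometric polynomial by the unimodular factor $e^{in\lambda}$ turns it into an arbitrary \emph{monic} algebraic polynomial $Q_n(z)=z^{n}+\cdots$ in $z=e^{i\lambda}$, whence
$$\sigma_n^2(f_a)=\min_{Q_n\ \mathrm{monic}}\frac{1}{2\pi}\int_{-\pi}^{\pi}\bigl|Q_n(e^{i\lambda})\bigr|^{2} f_a(\lambda)\,d\lambda=\kappa_n^{-2},$$
where $\kappa_n$ is the leading coefficient of the $n$-th orthonormal polynomial $\phi_n(z)=\kappa_n z^{n}+\cdots$ for the weight $f_a$ (equivalently $\sigma_n^2(f_a)=D_n/D_{n-1}$, a ratio of Toeplitz determinants of the Fourier coefficients $\widehat{f_a}(k)$). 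The theorem is therefore equivalent to the single asymptotic statement $\kappa_n^{-2}\sim C\,n^{-a}$ with $C=\Gamma^{2}\!\bigl((a+1)/2\bigr)/(\pi\,2^{2-a})$, and everything reduces to the growth rate of $\kappa_n$.

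Next I would exploit the very special form of $f_a$. Setting $z=e^{i\lambda}$ and using $\cot\lambda=i(z^{2}+1)/(z^{2}-1)$, the weight continues to a function whose only singularities on the unit circle are essential singularities at $z=\pm1$, i.e.\ at $\lambda=0,\pm\pi$, where $f_a$ has contact of infinite order with zero. The feature that makes $f_a$ tractable is that it is built from the self-reciprocal kernel $1/\cosh$: indeed $f_a=2e^{a\lambda\cot\lambda}/\bigl(1+e^{\pi a\cot\lambda}\bigr)$, and the monotone substitution $t=(a/2)\cot\lambda$ maps $(0,\pi)$ onto $\R$, carrying $f_a$ into a weight governed by $1/\cosh(\pi t)$. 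Using this change of variable together with Szeg\H{o}'s formulas for orthogonal polynomials with analytic weights, the concrete target becomes one of the equivalent statements $\|\phi_n/\kappa_n\|_{L^{2}(f_a)}^{2}=\kappa_n^{-2}\sim C\,n^{-a}$, or, in terms of the Verblunsky (reflection) coefficients $\alpha_k$ of $f_a$, the relation $|\alpha_k|^{2}\sim a/k$, since then
$$\kappa_n^{-2}=\widehat{f_a}(0)\prod_{k=0}^{n-1}\bigl(1-|\alpha_k|^{2}\bigr)\sim C\,n^{-a}.$$

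The hard part will be this asymptotic step. Because $\int_{-\pi}^{\pi}\log f_a(\lambda)\,d\lambda=-\infty$, the weight lies outside the Szeg\H{o} class, so the classical conclusion that $\kappa_n$ tends to a positive constant fails; instead $\kappa_n\to\infty$, and one must pin down its precise polynomial rate. I expect the core of the argument to be a Laplace/saddle-point analysis localized at the essential singularities $\lambda=0,\pm\pi$, where the mass of the extremal monic polynomial concentrates, controlled through the $1/\cosh$ representation above. Finally, having reached $\sigma_n^2(f_a)\sim C\,n^{-a}$, I would identify the constant $C=\Gamma^{2}\!\bigl((a+1)/2\bigr)/(\pi\,2^{2-a})$ by tracking the Gamma factors produced by that computation and invoking Stirling's asymptotics; the half-integer argument $(a+1)/2$ is exactly the signature of the kernel $1/\cosh$ under the substitution $t=(a/2)\cot\lambda$.
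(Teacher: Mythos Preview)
The paper does not actually prove Theorem~A; it is quoted as a known result from Rosenblatt~\cite{Ros}, with only the remark that the proof proceeds ``using the technique of orthogonal polynomials on the unit circle and Szeg\H{o}'s results.'' So there is no proof in the paper to compare against, only that one-line description of the method.

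Your reduction to the leading coefficients $\kappa_n$ of the orthonormal polynomials for the weight $f_a$ is exactly right and matches what the paper attributes to Rosenblatt. Where your sketch diverges from the historical argument is in the ``hard part.'' The weight $f_a$ is precisely the Pollaczek--Szeg\H{o} weight (see the paper's references \cite{Po,S1}), and for this weight Szeg\H{o} computed the orthogonal polynomials and their recurrence/leading coefficients \emph{explicitly}; the asymptotic $\kappa_n^{-2}\sim C\,n^{-a}$ with the stated Gamma constant is then read off from those explicit formulas, not obtained via a general Laplace or saddle-point analysis at the essential singularities. Your proposed route --- establishing $|\alpha_k|^2\sim a/k$ by asymptotic analysis of the weight near $\lambda=0,\pm\pi$ --- is plausible in spirit and would indeed yield the power $n^{-a}$, but as written it is only a plan: you have not indicated how the saddle-point computation would actually produce the Verblunsky asymptotics, nor how the exact constant $\Gamma^2((a+1)/2)/(\pi 2^{2-a})$ would emerge. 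The classical proof sidesteps all of this by exploiting the algebraic miracle that $f_a$ admits a closed-form orthogonal system; if you want a self-contained argument, that is the place to look rather than a generic steepest-descent estimate.
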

Note that the function in \eqref{nd4} 
was first considered by Pollaczek \cite{Po}, and then by Szeg\H{o} \cite{S1}, as
a weight-function of a class of orthogonal polynomials that 
serve as illustrations for certain 'irregular' phenomena in the theory of orthogonal polynomials. 
For the function $f_a$ in \eqref{nd4}, we have the following asymptotic
relation (for details see Szeg\H{o} \cite{S1} and Example \ref{ex3} below):
\beq \label{tt2}
f_a(\la)\sim
\left \{
\begin{array}{ll}
 2e^a\exp\left\{-{a\pi}/{|\la|}\right\} & \mbox{as $\la\to0$},\\
2\exp\left\{-{a\pi}/{(\pi-|\la|)}\right\} & \mbox{as $\la\to\pm\pi$}.
\end{array}
\right.
\eeq
Thus, the function $f_a$ in \eqref{nd4} has a very high order of contact
with zero at points $\la=0,\pm\pi$, due to which the process with spectral
density $f_a$ is deterministic and the prediction error $\si^2_n(f_a)$
in \eqref{nd6} decreases to zero like a power as $n\to\f$.

In Babayan et al. \cite{BGT} was proved that if the spectral
density $f$ is such that the sequence $\{\si_n(f)\}$ is weakly varying
(a term defined in Section \ref{WV}) and if, in addition, $g$ is
a nonnegative function that can have polynomial
type singularities, then the sequences $\{\si_n(fg)\}$ and $\{\si_n(f)\}$
have the same asymptotic behavior as $n\to\f$
(see Theorem B in Section \ref{d1}).
Using this result, Rosenblatt's Theorems A was extended in Babayan et al. \cite{BGT} to a
class of spectral densities of the form $f=f_ag$,
where $f_a$ is as in (\ref{nd4}) and $g$ is a nonnegative function that
can have polynomial type singularities
(see Theorem C in Section \ref{d1}).

In this paper, we extend the above quoted results to a broader class of
spectral densities, for which  
the function $g$ can have {\it arbitrary power type singularities}.

Throughout the paper we will use the following notation.\\
The standard symbols $\mathbb{N}$, $\mathbb{Z}$, $\mathbb{R}$ and $\mathbb{C}$
denote the sets of natural, integer, real and complex numbers, respectively.
Also, we denote $\Lambda: = [-\pi, \pi],$ 
$\mathbb{T}:=\{z\in \mathbb{C}: \, |z|=1\}$.
For a point $\la_0\in \Lambda$ and a number $\de>0$ by $O_\de(\la_0)$ we
denote a $\de$-neighborhood of $\la_0$, that is,
$O_\de(\la_0):=\{\la\in \Lambda: \ |\la-\la_0|<\de\}$.
By $L^p(\mu):=L^p(\mathbb{T},\mu)$ ($p\geq $1) we denote the weighted
Lebesgue space with respect to the measure $\mu$, and by $(\cdot,\cdot)_{p,\mu}$
and $||\cdot||_{p,\mu}$ we denote the inner product and the norm in $L^p(\mu)$, respectively.
In the special case where $\mu$ is the Lebesgue measure, 
we will use the notation $L^p$,
$(\cdot,\cdot)_{p}$ and $||\cdot||_{p}$, respectively.
For a function $f\geq0$ by $G(f)$ we denote the geometric mean of $f$.
For two functions $f(\la)\geq0$ and $g(\la)\geq0$, $\la \in \Lambda$,
we will write $f(\lambda){\sim}g(\lambda)$ as ${\lambda\to\lambda_0}$
if $\lim_{\lambda\to\lambda_0}{f(\lambda}/{g(\lambda)}=1$,
and $f(\lambda){\simeq}g(\lambda)$ as ${\lambda\to\lambda_0}$ if
$\lim_{\lambda\to\lambda_0}{f(\lambda}/{g(\lambda)}=c>0$.
We will use similar notation for sequences: for two sequences
$\{a_n\geq0, n\in\mathbb{N}\}$ and $\{b_n>0, n\in\mathbb{N}\}$,
we will write $a_n\sim b_n$ if $\lim_{n\to\f}{a_n}/{b_n}=1$,
$a_n{\simeq} b_n$ if $\lim_{n\to\f}{a_n}/{b_n}=c>0$;
$a_n=O(b_n)$ if ${a_n}/{b_n}$ is bounded,
and $a_n=o(b_n)$ if ${a_n}/{b_n}\to0$ as $n\to\f$.
The letters $C$, $c$, $M$ and $m$ with or without indices are used
to denote positive constants, the values of which can vary from line to line.

The paper is organized as follows.
In Section \ref{pre} we present some necessary notions and preliminary
results that are used throughout the paper.
In Section \ref{d1} we state some auxiliary results.
In Section \ref{d} we state the main results of the paper.
Section \ref{P} contains the proofs of the main results.
In Section \ref{Examples} we discuss some examples illustrating the obtained results.

\s{Preliminaries}
\label{pre}

In this section we present some notions and auxiliary results, which will be used in the sequel:
the Kolmogorov-Szeg\H{o} Theorem, formulas and some properties of the finite prediction error,
properties of the geometric mean of a function, and definition and properties of weakly
varying sequences.

\sn{Kolmogorov-Szeg\H{o}'s Theorem}
Let $X(t)$ be a centered discrete-time
stationary process defined on a probability space
$(\Om, \mathcal{F}, P)$ with covariance function $r(t)$, $t\in \mathbb{Z}$.
By the Herglotz theorem (see, e.g., Brockwell and Davis \cite{BD}, p. 117-18),
there is a finite measure $\mu$ on $\Lambda$ such that
the covariance function $r(t)$
admits the following {\it spectral representation}:
\beq
\label{i1}
r(t)=\inl e^{-it\la}d\mu(\la), \q t\in \mathbb{Z}.
\eeq
\n
The measure $\mu$ in (\ref{i1}) is called the {\it spectral measure} of the
process $X(t)$.
If $\mu$ is absolutely continuous (with respect to the Lebesgue measure),
then the function $f(\la):=d\mu(\la)/d\la$ is called the {\it spectral density}
of $X(t)$. 
We assume that $X(t)$ is a {\it non-degenerate} process, that is,
${\rm Var}[X(0)]:=\E|X(0)|^2=r(0)>0$ and, without loss of generality,
we may take $r(0)=1$.
Also, to avoid the trivial cases, we assume that the spectral measure
$\mu$ is {\it non-trivial}, that is, the support of $\mu$ has positive Lebesgue measure.

\begin{rem}
{\rm The parametrization of the unit circle $\mathbb{T}$ by the formula
$z=e^{i\la}$ establishes a bijection between $\mathbb{T}$ and the
interval $[-\pi,\pi)$. By means of this bijection the measure $\mu$ on
$\Lambda$ generates the corresponding measure on the unit circle $\mathbb{T}$,
which we also denote by $\mu$. Thus, depending
on the context, the measure $\mu$ will be supported either on $\Lambda$ or
on $\mathbb{T}$.
We use the standard Lebesgue decomposition of the measure $\mu$:
\beq
\label{di1}
d\mu(\la)=d\mu_{a}(\la) +d\mu_s(\la) =f(\la)d\la +d\mu_s(\la),
\eeq
where $\mu_a$ is the absolutely continuous part of $\mu$
(with respect to the Lebesgue measure)
and $\mu_s$ is the singular part of $\mu$, which is the sum
of the discrete and continuous singular components of $\mu$.}
\end{rem}

The next result describes the
asymptotic behavior of the prediction error $\si_n^2(\mu)$ for a stationary
process $X(t)$ with spectral measure $\mu$ of the form \eqref{di1}
and gives a spectral characterization of deterministic and nondeterministic processes
(see, e.g., Grenander and Szeg\H{o} \cite{GS}, p. 44).

\vskip2mm
\n
{\sl Kolmogorov-Szeg\H{o}'s Theorem.} {\it Let $X(t)$ be a non-degenerate stationary
process with spectral measure $\mu$ of the form \eqref{di1}.
The following relations hold.
\bea
\label{c013}
\lim_{n\to\f}\si_n^2(\mu)=\lim_{n\to\f}\si_n^2(f)=\si^2(f)=2\pi G(f),
\eea
where $G(f)$ is the {\it geometric mean} of $f$, namely} 
\beq
\label{a2}
G(f): = \left \{
\begin{array}{ll}
\exp\left\{\frac1{2\pi}\inl\ln f(\la)\,d\la \right\} &
\mbox{if \, $\ln f \in {L}^1(\Lambda)$}\\
           0, & \mbox { otherwise.} \qq
           \end{array}
           \right.
\eeq

It is remarkable that the limit in \eqref{c013} is independent of the singular part $\mu_s$.

The condition $\ln f \in {L}^1(\Lambda)$ in \eqref{a2} is equivalent to
the {\it Szeg\H{o} condition}:
\beq
\label{S}
\inl\ln f(\la)\,d\la > -\f
\eeq
(this equivalence follows because $\ln f(\la)\le f(\la)$).
The Szeg\H{o} condition \eqref{S} is also called the {\it non-determinism condition}.

In this paper 
we consider the class of deterministic
processes with absolutely continuous spectra.
We will assume that the corresponding spectral densities,
defined on the segment $[-\pi,\pi]$ (or on the unit circle $\mathbb{T}$),
are extended to $\mathbb{R}$ with period $2\pi$.

Following Rosenblatt \cite{Ros}, we will say that the spectral density $f(\la)$ has a
{\it very high order of contact with zero at a point} $\la_0$ if $f(\la)$
is positive everywhere except for the point $\la_0$, due to which
the Szeg\H{o} condition \eqref{S} is violated.

\sn{Properties of the finite prediction error and geometric mean}

Suppose we have observed the values $X(-n), \ldots, X(-1)$ of a centered,
real-valued stationary process $X(t)$ with spectral measure $\mu$
of the form \eqref{di1}.
The {\it one-step ahead linear prediction problem}
in predicting a random variable $X(0)$ based on the observed values
$X(-n), \ldots, X(-1)$ involves finding constants
$\hat c_k:=\hat c_{k,n}$, $k=1,2,\ldots,n$,
that minimize the one-step ahead prediction error: 
\bea
\label{OS1}
\si_{n}^2(\mu):
=\min_{\{c_k\}}{\E}\left|X(0) - \sum_{k=1}^{n}c_kX(-k)\right|^2
={\E}\left|X(0) - \sum_{k=1}^{n}\hat c_kX(-k)\right|^2.
\eea
Using Kolmogorov's isometric isomorphism $V:\, X(t)\leftrightarrow e^{it\la}$
(see, e.g., Babayan et al. \cite{BGT}),
in view of (\ref{OS1}), for the prediction error $\sigma_n^2(\mu)$ we can write
\bea
\label{OS2}
\sigma_n^2(\mu)=
\min_{\{c_k\}}\left\|1 -\sum_{k=1}^{n}c_ke^{-ik\la}\right\|^2_{2,\mu} 
=\min_{\{q_n\in \mathcal{Q}_n\}}\left\|q_n\right\|^2_{2,\mu}, 
\eea
where $||\cdot||_{2,\mu}$ is the norm in $L^2(\mathbb{T},\mu)$, and
\beq
\label{Q_n}
\mathcal{Q}_n: =\left\{q_n:  q_n(z)=z^{n} + c_1z^{n-1}+ \cdots c_n\right\}
\eeq
is the class of monic polynomials (i.e. with $c_0=1$) of degree $n$.
Thus, the problem of finding $\sigma_n^2(\mu)$ becomes to the problem of finding
the solution of the minimum problem \eqref{OS2}-\eqref{Q_n}.

The polynomial $p_n(z):=p_n(z,\mu)$ which solves the minimum problem \eqref{OS2}-\eqref{Q_n}
is called the {\it optimal polynomial} for $\mu$ in the class $\mathcal{Q}_n$.

The next result by Szeg\H{o} solves the minimum problem \eqref{OS2}-\eqref{Q_n}
(see, e.g., Grenander and Szeg\H{o} \cite{GS}, p. 38).
\begin{pp}
\label{Sz2}
The unique solution of the minimum problem \eqref{OS2}-\eqref{Q_n} is given by
$ p_n(z)=\kappa^{-1}_n\I_n(z)$, and the minimum in \eqref{OS2} is equal to
$||p_n||_{2,\mu}=\kappa^{-2}_n$, where $\I_n(z) = \kappa_nz^n + \cdots +l_n$ is the $n^{th}$
orthogonal polynomial on the unit circle associated with the measure $\mu$,
and $\kappa_n$ is the leading coefficient of $\I_n(z)$.
\end{pp}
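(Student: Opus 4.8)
The plan is to exploit the classical fact that the orthonormal polynomials $\I_0, \I_1, \ldots, \I_n$ associated with $\mu$ form an orthonormal basis of the finite-dimensional subspace $\mathcal{P}_n:=\mathrm{span}\{1, z, \ldots, z^n\}$ of $L^2(\mathbb{T},\mu)$, and then to read off the minimum by a Parseval expansion. Before doing so I would record the preliminary fact that, since the spectral measure $\mu$ is non-trivial, the monomials $1, z, \ldots, z^n$ are linearly independent in $L^2(\mathbb{T},\mu)$; hence the Gram--Schmidt procedure produces well-defined orthonormal polynomials $\I_k(z)=\ka_k z^k+\cdots$ with $\deg \I_k=k$ and strictly positive leading coefficient $\ka_k>0$.

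First I would expand an arbitrary competitor $q_n\in\mathcal{Q}_n$ in this orthonormal basis, writing $q_n=\sum_{k=0}^{n} a_k\I_k$ with Fourier coefficients $a_k=(q_n,\I_k)_{2,\mu}$. The monic constraint on $q_n$ then translates into a single linear condition on the top coefficient: comparing the coefficients of $z^n$ on both sides and using that $\I_k$ with $k<n$ contributes nothing to degree $n$ while $\I_n=\ka_n z^n+\cdots$, I obtain $a_n\ka_n=1$, and hence $a_n=\ka_n^{-1}$, a value that is fixed across the entire class $\mathcal{Q}_n$.

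Next I would apply Parseval's identity in $L^2(\mathbb{T},\mu)$ to get $\|q_n\|_{2,\mu}^2=\sum_{k=0}^{n}|a_k|^2\ge |a_n|^2=\ka_n^{-2}$, with equality if and only if $a_0=a_1=\cdots=a_{n-1}=0$. This simultaneously yields the value of the minimum, $\ka_n^{-2}$, and the uniqueness of the minimizer, namely $q_n=a_n\I_n=\ka_n^{-1}\I_n=:p_n$; note that $p_n$ is indeed monic, since its leading coefficient is $\ka_n^{-1}\cdot\ka_n=1$, so $p_n\in\mathcal{Q}_n$. Finally, $\|p_n\|_{2,\mu}^2=\ka_n^{-2}\|\I_n\|_{2,\mu}^2=\ka_n^{-2}$ confirms the stated formula for the prediction error.

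There is no serious obstacle here, as this is the standard extremal characterization of orthogonal polynomials on the unit circle; the argument is insensitive to whether the coefficients $a_k$ are real or complex, because $a_n=\ka_n^{-1}$ is forced by the monic normalization alone. The only point requiring genuine care is the preliminary step of justifying that $\{\I_k\}_{k=0}^n$ is a bona fide orthonormal basis with $\ka_n>0$; this rests precisely on the non-triviality of $\mu$ assumed above (its support has positive Lebesgue measure), which guarantees that the Gram matrix of $\{1, z, \ldots, z^n\}$ is positive definite and therefore that $\ka_n$ is well defined and strictly positive.
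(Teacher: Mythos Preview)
Your argument is correct and is precisely the standard extremal characterization of orthogonal polynomials; there is nothing to fix. Note, however, that the paper does not supply its own proof of this proposition at all---it simply cites Grenander and Szeg\H{o} \cite{GS}, p.~38, where exactly this Parseval/Gram--Schmidt argument appears---so there is no ``paper's proof'' to compare against beyond the classical one you have reproduced.
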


Thus, for the prediction error $\sigma_n^2(\mu)$ we have the following formula:
\bea
\label{mm1}
\si^2_n(\mu)=
\min_{\{q_n\in \mathcal{Q}_n\}}\left\|q_n\right\|^2_{2,\mu}
=\left\|p_n(\mu)\right\|^2_{2,\mu}=\left\|\kappa^{-1}_n\I_n(\mu)\right\|^2_{2,\mu}=\kappa^{-2}_n.
\eea
Denote by $D_n=D_n(\mu):=\det[r(t-s), \, t,s =0,1,\ldots n]$ the $n^{th}$
Toeplitz determinant generated by the measure $\mu$,
where $r(t)$ is the covariance function given by \eqref{i1}.
Taking into account that $\kappa^{2}_n={D_{n-1}/D_n}$
(see, e.g., Grenander and Szeg\H{o} \cite{GS}, p. 38),
in view of \eqref{mm1} we obtain the following formula
for the prediction error $\sigma_n^2(\mu)$:
\bea
\label{mm1d}
\si^2_n(\mu)=\frac{D_n(\mu)}{D_{n-1}(\mu)}.
\eea

In what follows we assume that the spectral measure $\mu$ is absolutely
continuous with spectral density $f$, and instead of $\si^2_n(\mu)$, $p_n(\mu)$ and $D_n(\mu)$
we use the notation $\si^2_n(f)$, $p_n(f)$ and $D_n(f)$, respectively.

In the next proposition we list a number of properties of the prediction error $\sigma_n^2(f)$.
\begin{pp}
\label{pp3}
The prediction error $\sigma_n^2(f)$ possesses the following properties.
\begin{itemize}
\item[(a)]
$\sigma_n^2(f)$ is a non-decreasing functional of $f$:
$\sigma_{n}^2(f_1)\leq\sigma_n^2(f_2)$ when $f_1(\lambda)\leq f_2(\lambda)$, $\la\in [-\pi,\pi]$.
\item[(b)]
If $f(\la)=g(\la)$ almost everywhere on $[-\pi,\pi]$, then $\si_n^2(f) = \si_n^2(g).$

\item[(c)]
For any positive constant $c$ we have $\si_n^2(cf) = c\si_n^2(f)$.
\item[(d)]
If $\bar f(\la)=f(\la-\la_0)$, $\la_0\in [-\pi,\pi]$, then $\si_n^2(\bar f) = \si_n^2(f).$
\end{itemize}
\end{pp}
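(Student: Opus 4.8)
The plan is to derive all four properties from the single variational formula \eqref{OS2}, which in the absolutely continuous case reads
\beq
\si_n^2(f) = \min_{q_n \in \mathcal{Q}_n} \|q_n\|^2_{2,f} = \min_{q_n \in \mathcal{Q}_n} \inl |q_n(e^{i\la})|^2 f(\la)\, d\la .
\eeq
Everything then reduces to elementary properties of the integral functional $F_f(q_n) := \inl |q_n(e^{i\la})|^2 f(\la)\, d\la$, together with the way the monic class $\mathcal{Q}_n$ behaves under the transformations of $f$ appearing in (a)--(d). I would state this formula once and invoke it throughout.

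For (a), fix an arbitrary $q_n \in \mathcal{Q}_n$; since $|q_n(e^{i\la})|^2 \geq 0$ and $f_1 \leq f_2$ pointwise, we get $F_{f_1}(q_n) \leq F_{f_2}(q_n)$. Letting $p_n(f_2)$ be the optimal polynomial for $f_2$ (which exists by Proposition \ref{Sz2}), we obtain $\si_n^2(f_1) \leq F_{f_1}(p_n(f_2)) \leq F_{f_2}(p_n(f_2)) = \si_n^2(f_2)$, the claimed monotonicity. For (b), if $f = g$ almost everywhere then $F_f(q_n) = F_g(q_n)$ for every $q_n$, because the integrands coincide a.e.; hence the two functionals, and therefore their minima, are identical. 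For (c), for $c > 0$ the constant factors out of both the integral and the minimum, giving $\si_n^2(cf) = \min_{q_n} c\, F_f(q_n) = c\, \si_n^2(f)$. None of these three steps presents any difficulty.

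The only genuinely nontrivial item is (d), which I expect to be the main obstacle, and I would treat it last. First, since the spectral densities are extended $2\pi$-periodically, the substitution $\theta = \la - \la_0$ leaves the integral over a full period unchanged, so that
\beq
\si_n^2(\bar f) = \min_{q_n \in \mathcal{Q}_n} \inl |q_n(e^{i(\theta + \la_0)})|^2 f(\theta)\, d\theta .
\eeq
The crux is that rotating the argument by $e^{i\la_0}$ maps $\mathcal{Q}_n$ onto itself up to a unimodular factor: writing $q_n(z) = z^n + c_1 z^{n-1} + \cdots + c_n$, a direct expansion gives $q_n(z e^{i\la_0}) = e^{in\la_0}\, \tilde q_n(z)$, where $\tilde q_n(z) = z^n + (c_1 e^{-i\la_0}) z^{n-1} + \cdots + (c_n e^{-in\la_0})$ is again monic of degree $n$. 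Since $|e^{in\la_0}| = 1$, the unimodular factor disappears in the modulus, so $|q_n(e^{i(\theta+\la_0)})|^2 = |\tilde q_n(e^{i\theta})|^2$; and because the coefficient map $c_k \mapsto c_k e^{-ik\la_0}$ is a bijection of $\mathcal{Q}_n$ onto itself, minimizing over $q_n$ is the same as minimizing over $\tilde q_n$. Thus the last display equals $\si_n^2(f)$, proving (d). The point to get exactly right is this bijection of the polynomial class under rotation, which is precisely what makes the two minima coincide.
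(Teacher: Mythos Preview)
Your proof is correct. For (a) you and the paper argue identically, via the optimal polynomial $p_n(f_2)$ and monotonicity of the integral. For (b)--(d), however, the paper does not work from the variational formula \eqref{OS2} but from the Toeplitz-determinant formula \eqref{mm1d}, $\si_n^2(f)=D_n(f)/D_{n-1}(f)$: (b) follows because the entries $r(t)=\inl e^{-it\la}f(\la)\,d\la$ of $D_n(f)$ are insensitive to null sets; (c) from the homogeneity $D_n(cf)=c^{n}D_n(f)$; and (d) from the shift-invariance $D_n(\bar f)=D_n(f)$, both of the latter cited from Grenander--Szeg\H{o}. Your direct variational argument is more self-contained---nothing beyond \eqref{OS2} and the existence of the minimizer is needed---and your treatment of (d) via the rotation bijection $q_n(z)\mapsto e^{-in\la_0}q_n(ze^{i\la_0})$ on $\mathcal{Q}_n$ is exactly the polynomial-level mechanism underlying the determinant identity the paper quotes. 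The paper's route is terser once those determinant facts are granted, while yours makes the shift-invariance transparent without external references.
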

\begin{proof}
To prove assertion (a), observe that by the definition of optimal polynomials
$p_n(z,f_1)$ and $p_n(z,f_2)$, corresponding to spectral densities $f_1$ and $f_2$,
respectively, and formula  \eqref{mm1} we have
\beaa
\sigma_n^2(f_1)=\|p_n(f_1)\|^2_{2,f_1}\leq \|p_n(f_2)\|^2_{2,f_1}
\leq \|p_n(f_2)\|^2_{2,f_2}=\sigma_n^2(f_2).
\eeaa
In the last relation the first inequality follows from the optimality
of the polynomial $p_n(z,f_1)$, while the second inequality follows from
assumption that $f_1(\lambda)\leq f_2(\lambda)$, $\lambda\in \Lambda$.

We show that assertions (b)--(d) follow from formula \eqref{mm1d}.
Indeed, observing that the elements of the Toeplitz determinant $D_n(f)$
being integrals are invariant with respect to null sets, in view of \eqref{mm1d} we obtain assertion (b).
To prove assertion (c), observe that for the Toeplitz determinants generated by
the functions $cf$ and $f$ we have $D_n(cf)= c^nD_n(f)$
(see Grenander and Szeg\H{o} \cite{GS}, p. 64-65).
Hence in view of \eqref{mm1d} we have
$$\si_n^2(cf) = \frac{D_n(cf)}{D_{n-1}(cf)}= \frac{c^nD_n(f)}{c^{n-1}D_{n-1}(f)}= c\si_n^2(f).$$

Finally, the assertion (d) follows from \eqref{mm1d} and the fact that
$D_n(\bar f)= D_n(f)$ (see Grenander and Szeg\H{o} \cite{GS}, p. 64-65).
\end{proof}



Recall that for a function $h\geq0$ by $G(h)$ we denote the geometric mean of $h$
(see formula \eqref{a2}).
In the next proposition we list some properties of the geometric mean $G(h)$
(see Babayan et al. \cite{BGT}).
\begin{pp}
\label{p4.2}
The following assertions hold.
\begin{itemize}
\item[(a)]
Let $c>0$, $\al\in\mathbb{R}$, $f(\la)\geq0$ and $g(\la)\geq0$. Then
\beq
\label{gt1}
G(c)=c, \q G(fg)=G(f)G(g), \q G(f^\al)=G^\al(f).
\eeq
\item[(b)]
$G(f)$ is a non-decreasing functional of $f$: if \ $0\leq f(\la)\leq g(\la)$,
then $0\leq G(f)\leq G(g)$.
In particular, if $0\leq f(\la)\leq 1$, then $0\leq G(f)\leq 1$.
\item[(c)]
If $t(\lambda)$ is a nonnegative trigonometric polynomial, then $G(t^\al)>0$
for $\al\in\mathbb{R}$.
\end{itemize}
\end{pp}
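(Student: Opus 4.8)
The plan is to derive all three assertions directly from the definition \eqref{a2} of the geometric mean, using the elementary identities $\ln(fg)=\ln f+\ln g$ and $\ln(f^\alpha)=\alpha\ln f$ together with monotonicity of $\exp$ and of the integral, extended to take the value $-\infty$ with the convention $\exp(-\infty)=0$. Throughout I would exploit the observation already recorded after \eqref{S}, namely $\ln f\le f$, so that for an integrable $f\ge0$ the positive part $(\ln f)^+$ is automatically integrable; hence $\frac1{2\pi}\inl\ln f\,d\lambda$ is a well-defined element of $[-\infty,\infty)$, and $G(f)=0$ precisely when this integral equals $-\infty$, i.e. when the Szeg\H{o} condition fails.

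For part (a), the identity $G(c)=c$ is immediate since $\inl\ln c\,d\lambda=2\pi\ln c$. For $G(fg)=G(f)G(g)$ I would split into cases: if both $\ln f,\ln g\in L^1(\Lambda)$, then the integral of $\ln(fg)=\ln f+\ln g$ splits additively and exponentiating gives the product; if at least one factor violates the Szeg\H{o} condition, then the corresponding integral is $-\infty$, so $\inl\ln(fg)\,d\lambda=-\infty$ as well, and both sides equal $0$. The same bookkeeping applied to $\ln(f^\alpha)=\alpha\ln f$ yields $G(f^\alpha)=G^\alpha(f)$ in the nondegenerate case $\ln f\in L^1(\Lambda)$; in the degenerate case $G(f)=0$ the identity persists for $\alpha\ge0$ (reading $G(1)=1$ when $\alpha=0$).

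Part (b) is pure monotonicity: from $0\le f\le g$ we get $\ln f\le\ln g$ pointwise (with $\ln 0=-\infty$), hence $\inl\ln f\,d\lambda\le\inl\ln g\,d\lambda$ as elements of $[-\infty,\infty)$, and applying the increasing map $\exp$ gives $0\le G(f)\le G(g)$. The special case follows by taking $g\equiv1$ and invoking $G(1)=1$ from part (a).

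The substantive step, and the one I expect to be the main obstacle, is part (c). By part (a) it suffices to prove $G(t)>0$, equivalently $\inl\ln t\,d\lambda>-\infty$, for a nonnegative trigonometric polynomial $t\not\equiv0$; once this is known, $G(t^\alpha)=G(t)^\alpha>0$ for every real $\alpha$. The plan is to invoke the Fej\'er--Riesz factorization, writing $t(\lambda)=|P(e^{i\lambda})|^2$ for an algebraic polynomial $P(z)=a_m\prod_{j}(z-z_j)$, so that $\ln t(\lambda)=2\ln|a_m|+2\sum_j\ln|e^{i\lambda}-z_j|$. The crux is then the classical fact that $\inl\ln|e^{i\lambda}-z_0|\,d\lambda$ is finite for every $z_0\in\mathbb{C}$; indeed it equals $2\pi\ln^+|z_0|$ by Jensen's formula, the only delicate point being that when $|z_0|=1$ the integrand has merely a logarithmic singularity, which is integrable. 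Summing the finitely many terms shows $\ln t\in L^1(\Lambda)$, whence $G(t)>0$ and the proof is complete.
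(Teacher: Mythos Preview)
The paper does not actually prove this proposition; it simply cites Babayan et al.\ \cite{BGT}. Your argument is correct and is the standard one: parts (a) and (b) follow from the elementary identities and monotonicity of $\ln$ and $\exp$ applied to the extended-real integral, and your treatment of the degenerate cases (including the honest caveat that $G(f^\alpha)=G(f)^\alpha$ requires $\alpha\ge0$ when $G(f)=0$) is appropriate. For part (c), reducing to $G(t)>0$ via (a) and then invoking Fej\'er--Riesz together with the integrability of $\ln|e^{i\lambda}-z_0|$ (equivalently Jensen's formula) is exactly the right route; this is essentially how such results are established in the orthogonal-polynomial literature (cf.\ Grenander--Szeg\H{o} \cite{GS} or Simon \cite{Sim-1}).

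One small remark: your additivity argument for $G(fg)=G(f)G(g)$ in the degenerate case implicitly uses that $(\ln f)^+$ and $(\ln g)^+$ are integrable, which you justify via $\ln f\le f$ under the standing assumption $f\in L^1(\Lambda)$. Strictly speaking the proposition as stated only says $f,g\ge0$, and without integrability one can cook up counterexamples where $G(f)=G(g)=0$ but $G(fg)=1$. In the paper's applications $f$ and $g$ are always spectral densities or bounded functions, so this is harmless, but it is worth noting that the identity is being asserted in that context.
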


\sn{Weakly varying sequences}
\label{WV}

We recall the notion of {\it weakly varying} sequences and state some of
their properties (see Babayan et al. \cite{BGT})., 
This notion will be used in the specification of the class of deterministic
processes to be considered in this paper.
\begin{den}
\label{kd1}
A sequence of non-zero numbers $\{a_n, \, n \in\mathbb{N}\}$ is said to be
weakly varying if $\lim_{n\to\f}{a_{n+1}}/{a_{n}} =1.$
\end{den}

In the next proposition we list some simple properties of the weakly varying
sequences, which can easily be verified (see Babayan et al. \cite{BGT}).
\begin{pp}
\label{p4.1}
The following assertions hold.
\begin{itemize}
\item[(a)]
If $\{a_n, \, n \in\mathbb{N}\}$ is a weakly varying sequence, then
$\lim_{n\to\f}{a_{n+\nu}}/{a_{n}} =1$ for any $\nu\in \mathbb{N}$.
\item[(b)]
If $\{a_n, \, n \in\mathbb{N}\}$ is a sequence such that $a_n\to a\neq0$
as $n\to\f$, then $\{a_n\}$ is a weakly varying sequence.
\item[(c)]
If $\{a_n, \, n \in\mathbb{N}\}$ and $\{b_n, \, n \in\mathbb{N}, \}$ are
weakly varying sequences, then $ca_n$ $(c\neq0),$
$a_n^\al \, (\al\in\mathbb{R}, a_n>0)$, $a_nb_n$ and $a_n/b_n$ also are
weakly varying sequences.
\item[(d)]
If $\{a_n, \, n \in\mathbb{N}\}$ is a weakly varying sequence,
and $\{b_n, \, n \in\mathbb{N}\}$ is a sequence of non-zero numbers such that
$\lim_{n\to\infty}{b_n}/{a_n}=c\neq 0,$
then $\{b_n, \, n \in\mathbb{N}\}$ is also a weakly varying sequence.
\end{itemize}
\end{pp}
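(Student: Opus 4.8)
The plan is to read every assertion off Definition~\ref{kd1} using only the elementary algebra of limits, since in each case the claim reduces to showing that the ratio of consecutive terms of the sequence in question converges to $1$. I would begin with part~(a), arguing by induction on $\nu$ (equivalently, by telescoping). The case $\nu=1$ is the definition itself. For the inductive step I would factor
\[
\frac{a_{n+\nu+1}}{a_n}=\frac{a_{n+\nu+1}}{a_{n+\nu}}\cdot\frac{a_{n+\nu}}{a_n},
\]
noting that the first factor tends to $1$ because it is the defining ratio with its index shifted by $\nu$ (and an index shift does not change a limit), while the second factor tends to $1$ by the inductive hypothesis; the product of two sequences each converging to $1$ converges to $1$.

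Part~(b) is immediate: since $a_n\to a\neq0$ as $n\to\f$, the terms are eventually non-zero, so the ratios are well defined, and the quotient rule for limits gives $a_{n+1}/a_n\to a/a=1$. For part~(c) each of the four cases is a one-line computation on consecutive terms. For $ca_n$ the constant cancels, leaving $a_{n+1}/a_n\to1$. For the product $a_nb_n$ and the quotient $a_n/b_n$ one factors the consecutive ratio as $(a_{n+1}/a_n)(b_{n+1}/b_n)$ and $(a_{n+1}/a_n)/(b_{n+1}/b_n)$ respectively, and each factor tends to $1$. For the power $a_n^\al$ one writes the consecutive ratio as $(a_{n+1}/a_n)^\al$ and invokes continuity of the map $x\mapsto x^\al$ at $x=1$ to conclude it tends to $1^\al=1$.

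Finally, for part~(d) I would expand
\[
\frac{b_{n+1}}{b_n}=\frac{b_{n+1}}{a_{n+1}}\cdot\frac{a_{n+1}}{a_n}\cdot\frac{a_n}{b_n},
\]
and pass to the limit: $b_{n+1}/a_{n+1}\to c$ and $a_n/b_n\to1/c$ (both from $b_n/a_n\to c\neq0$), while $a_{n+1}/a_n\to1$ by the weak variation of $\{a_n\}$, so the product tends to $c\cdot1\cdot(1/c)=1$. I do not anticipate any genuine obstacle, as the whole proposition is a routine exercise in limit arithmetic; the only points deserving a word of care are checking that the sequences involved are eventually non-zero so that the ratios make sense, and, in~(c), appealing to continuity of the power function rather than manipulating $a_n^\al$ directly when $\al$ is not an integer.
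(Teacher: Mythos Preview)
Your argument is correct in every part and is exactly the routine limit arithmetic one would expect. The paper itself does not prove this proposition at all: it merely states that the properties ``can easily be verified'' and refers to Babayan et al.\ \cite{BGT}, so there is no alternative approach to compare against.
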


\section{Asymptotic behavior of the prediction error: Auxiliary results}
\label{d1}
In this section we state some auxiliary results concerning asymptotic
behavior of the prediction error (cf. Babayan et al. \cite{BGT}).

In what follows we consider the class of {\it deterministic} processes
possessing spectral densities for which the sequence of
prediction errors $\{\si_n(f)\}$ is weakly varying,
and denote by $\mathcal{F}$ the class of the corresponding spectral densities:
\beq
\label{k04}
\mathcal{F}:=\left\{f\in L^1(\Lambda): \, \, f\geq 0, \, \, G(f)=0, \,\, \lim_{n\to\f} \frac{\sigma_{n+1}(f)}{\sigma_{n}(f)} =1\right\}.
\eeq
\begin{rem}
\label{vvv2}
{\rm According to Rakhmanov's theorem
(see Rakhmanov \cite{Ra4} 
and Babayan et al. \cite{BGT})
a sufficient condition for $f\in\mathcal{F}$ is that 
$f>0$ almost everywhere on $\Lambda$.
Thus, the considered class $\mathcal{F}$ includes all the almost everywhere positive spectral densities.
On the other hand, according to Theorem 3.2 and Remark 3.7 of
Babayan et al. \cite{BGT}, the class $\mathcal{F}$ does not contain spectral densities, which vanish on an entire segment of $\Lambda$
(or on an arc of the unit circle $\mathbb{T}$).}
\end{rem}

\begin{den}
\label{kd33}
Let $\mathcal{F}$ be the class of spectral densities defined by \eqref{k04}.
For $f\in \mathcal{F}$ denote by $\mathcal{M}_f$ the class of 
nonnegative functions $g(\la)$ $(\la\in\Lambda)$ satisfying the following
three conditions:
$G(g)>0$, $fg\in L^1(\Lambda)$, and
\beq
\label{e5.6} 
\lim_{n\to\f}\frac{\si^2_{n}(fg)}{\si^2_{n}(f)} =G(g), 
\eeq
that is,
\beq
\label{e5.6a}
\mathcal{M}_f:=\left\{g\geq 0, \, \, G(g)>0, \,\, fg\in L^1(\Lambda),
\, \, \lim_{n\to\f}\frac{\si^2_{n}(fg)}{\si^2_{n}(f)} =G(g)\right\}.
\eeq
\end{den}


The next proposition shows that the class $\mathcal{F}$
is close under multiplication by functions from the class $\mathcal{M}_f$.
\begin{pp}
\label{ppg1}
If $f\in \mathcal{F}$ and $g\in \mathcal{M}_f$, then $fg\in \mathcal{F}$.
\end{pp}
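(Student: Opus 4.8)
The plan is to verify directly that $fg$ satisfies the three defining conditions of the class $\mathcal{F}$ in \eqref{k04}: nonnegativity, vanishing geometric mean $G(fg)=0$, and the weak-variation property $\lim_{n\to\f}\si_{n+1}(fg)/\si_n(fg)=1$. The first two conditions are immediate from the hypotheses and the multiplicativity of the geometric mean. Since $f\geq0$ and $g\geq0$, we have $fg\geq0$. For the geometric mean, Proposition \ref{p4.2}(a) gives $G(fg)=G(f)G(g)$; because $f\in\mathcal{F}$ we have $G(f)=0$, and since $G(g)>0$ is finite (as $g\in\mathcal{M}_f$), the product is $G(fg)=0$. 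The integrability condition $fg\in L^1(\Lambda)$ is part of the definition of $\mathcal{M}_f$, so $fg\in L^1(\Lambda)$ as required.

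The substantive step is establishing the weak-variation property for $\{\si_n(fg)\}$, and this is where the defining relation \eqref{e5.6} of $\mathcal{M}_f$ does the real work. First I would write the ratio in terms of known quantities by inserting the reference sequence $\{\si_n(f)\}$:
\beq
\label{ppg1proof1}
\frac{\si^2_{n+1}(fg)}{\si^2_{n}(fg)}
=\frac{\si^2_{n+1}(fg)}{\si^2_{n+1}(f)}\cdot
\frac{\si^2_{n+1}(f)}{\si^2_{n}(f)}\cdot
\frac{\si^2_{n}(f)}{\si^2_{n}(fg)}.
\eeq
As $n\to\f$, the first factor tends to $G(g)$ by \eqref{e5.6}, the third factor tends to $1/G(g)$ as the reciprocal of the same limit, and the middle factor tends to $1$ because $f\in\mathcal{F}$ is itself weakly varying (note that $\si^2_{n+1}(f)/\si^2_n(f)=(\si_{n+1}(f)/\si_n(f))^2\to1$). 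Hence the product of the three limits is $G(g)\cdot1\cdot G(g)^{-1}=1$, giving $\lim_{n\to\f}\si^2_{n+1}(fg)/\si^2_{n}(fg)=1$, and therefore $\lim_{n\to\f}\si_{n+1}(fg)/\si_{n}(fg)=1$ after taking the square root.

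The one place requiring care — and the main (minor) obstacle — is the legitimacy of splitting the limit of the product \eqref{ppg1proof1} into the product of limits: this is valid precisely because each of the three individual factors converges to a finite, \emph{nonzero} limit, which is exactly why the conditions $G(g)>0$ and $G(g)<\f$ are essential and not merely technical. The positivity $G(g)>0$ guarantees the third factor $\si^2_n(f)/\si^2_n(fg)\to 1/G(g)$ is a finite limit, so no indeterminate $0/0$ or $\infty$ arises. With that observed, the conclusion $fg\in\mathcal{F}$ follows by assembling the three verified conditions, and the proof is complete.
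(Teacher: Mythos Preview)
Your proof is correct and follows essentially the same approach as the paper's: both verify the three defining conditions of $\mathcal{F}$ and establish the weak-variation property via the identical telescoping decomposition $\si_{n+1}(fg)/\si_n(fg)=\bigl(\si_{n+1}(fg)/\si_{n+1}(f)\bigr)\cdot\bigl(\si_{n+1}(f)/\si_n(f)\bigr)\cdot\bigl(\si_n(f)/\si_n(fg)\bigr)$, with the factors tending to $\sqrt{G(g)}$, $1$, and $1/\sqrt{G(g)}$ respectively. If anything, your version is slightly more careful in explicitly justifying why the limit of the product splits as a product of limits.
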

\begin{proof}
According to the definition of the class $\mathcal{F}$, we have
to show that $fg\in L^1(\Lambda)$, $G(fg)=0$, and
\beq
\label{gg}
\lim_{n\to\f}\frac{\si_{n+1}(fg)}{\si_{n}(fg)} =1.
\eeq
The assertion $fg\in L^1(\Lambda)$ follows from the condition
$g\in \mathcal{M}_f$, while $G(fg)=0$ follows from the condition
$f\in \mathcal{F}$ and Proposition \ref{p4.2}(a): $G(fg)=G(f)G(g)=0$.
As for the relation \eqref{gg}, 
we can write
\beaa
&&\lim_{n\to\infty}\frac{\sigma_{n+1}(fg)}{\sigma_{n}(fg)}
=\lim_{n\to\infty}\frac{\sigma_{n+1}(fg)}{\sigma_{n+1}(f)}
\cd\frac{\sigma_{n+1}(f)}{\sigma_{n}(f)}
\cd\frac{\sigma_{n}(f)}{\sigma_{n}(fg)}\\
&&=\lim_{n\to\infty}\frac{\sigma_{n+1}(fg)}{\sigma_{n+1}(f)}
\cd \lim_{n\to\infty}\frac{\sigma_{n+1}(f)}{\sigma_{n}(f)}
\cd \lim_{n\to\infty}\frac{\sigma_{n}(f)}{\sigma_{n}(fg)}
= \sqrt{G(g)}\cd 1/\sqrt{G(g)}=1,
\eeaa
and the result follows.
\end{proof}
The next result shows that the class $\mathcal{M}_f$ in a certain sense
is close under multiplication.
\begin{pp}
\label{ppg}
Let $f\in \mathcal{F}$. If $g_1\in \mathcal{M}_f$ and $g_2\in \mathcal{M}_{fg_1}$, then $g:=g_1g_2\in \mathcal{M}_f$ and $fg\in \mathcal{F}$.
In particular, if $g\in \mathcal{M}_f\cap\mathcal{M}_{fg}$,
then $g^2\in \mathcal{M}_f$. 
\end{pp}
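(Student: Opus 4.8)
The plan is to verify directly that $g=g_1g_2$ satisfies the three defining conditions of the class $\mathcal{M}_f$ in \eqref{e5.6a}, and then to deduce $fg\in\mathcal{F}$ from Proposition \ref{ppg1}. First I would note that since $f\in\mathcal{F}$ and $g_1\in\mathcal{M}_f$, Proposition \ref{ppg1} already gives $fg_1\in\mathcal{F}$, so that the class $\mathcal{M}_{fg_1}$ is well defined and the hypothesis $g_2\in\mathcal{M}_{fg_1}$ is meaningful.

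For the condition $G(g)>0$, I would use the multiplicativity of the geometric mean from Proposition \ref{p4.2}(a): $G(g)=G(g_1g_2)=G(g_1)G(g_2)$. Since $g_1\in\mathcal{M}_f$ gives $G(g_1)>0$ and $g_2\in\mathcal{M}_{fg_1}$ gives $G(g_2)>0$, we obtain $G(g)>0$. The integrability requirement $fg=(fg_1)g_2\in L^1(\La)$ is exactly the condition $(fg_1)g_2\in L^1(\La)$ already contained in $g_2\in\mathcal{M}_{fg_1}$.

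The core of the argument is the limit relation \eqref{e5.6} for $g$. Since $f,fg_1\in\mathcal{F}$, the sequences $\{\si_n(f)\}$ and $\{\si_n(fg_1)\}$ consist of non-zero numbers, so the factorization
\beaa
\frac{\si^2_n(fg)}{\si^2_n(f)}&=&\frac{\si^2_n(fg_1g_2)}{\si^2_n(fg_1)}\cd\frac{\si^2_n(fg_1)}{\si^2_n(f)}
\eeaa
is legitimate for every $n$. Letting $n\to\f$, the first factor tends to $G(g_2)$ by $g_2\in\mathcal{M}_{fg_1}$ and the second to $G(g_1)$ by $g_1\in\mathcal{M}_f$; both limits exist, so their product tends to $G(g_1)G(g_2)=G(g_1g_2)=G(g)$, which is precisely \eqref{e5.6}. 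This shows $g\in\mathcal{M}_f$, and then $fg\in\mathcal{F}$ follows at once from Proposition \ref{ppg1} applied to $f\in\mathcal{F}$ and $g\in\mathcal{M}_f$.

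I do not expect a serious obstacle: the one point needing care is the legitimacy of the factorization above, which rests on $\si_n(f)\neq0$ and $\si_n(fg_1)\neq0$, guaranteed because weakly varying sequences are by definition sequences of non-zero numbers and $f,fg_1\in\mathcal{F}$. Finally, the ``in particular'' assertion is the special case $g_1=g_2=g$: the hypothesis $g\in\mathcal{M}_f\cap\mathcal{M}_{fg}$ means $g\in\mathcal{M}_f$ and $g\in\mathcal{M}_{fg}=\mathcal{M}_{fg_1}$ with $g_1=g$, so the main statement immediately yields $g^2=g_1g_2\in\mathcal{M}_f$.
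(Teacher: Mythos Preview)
Your proposal is correct and follows essentially the same route as the paper: both verify $G(g)>0$ via multiplicativity, take $fg\in L^1(\Lambda)$ from $g_2\in\mathcal{M}_{fg_1}$, factor $\si_n^2(fg)/\si_n^2(f)$ through $\si_n^2(fg_1)$, pass to the limit, and invoke Proposition \ref{ppg1} for $fg\in\mathcal{F}$. Your explicit remark on the well-definedness of $\mathcal{M}_{fg_1}$ and the non-vanishing of $\si_n(f),\si_n(fg_1)$ is a minor elaboration not spelled out in the paper, but the arguments are otherwise identical.
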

\begin{proof}
By the definition of the classes $\mathcal{M}_f$ and $\mathcal{M}_{fg_1}$, we have $G(g_1)>0$, $G(g_2)>0$, and hence, by Proposition \ref{p4.2}(a),
$G(g)>0$. By Proposition \ref{ppg1} we have $fg_1\in \mathcal{F}$. Hence, taking into account that $g_2\in \mathcal{M}_{fg_1}$ we have $fg_1g_2\in L^1(\Lambda)$, and
\beq
\label{gg1}
\lim_{n\to\f}\frac{\si^2_{n}(fg_1g_2)}{\si^2_{n}(fg_1)} =G(g_2).
\eeq
Then, we can write
\beaa
&&\lim_{n\to\infty}\frac{\sigma_n^2(fg)}{\sigma_n^2(f)}
=\lim_{n\to\infty}\frac{\sigma_n^2(fg_1g_2)}{\sigma_n^2(f)}
=\lim_{n\to\infty}\frac{\sigma_n^2(fg_1g_2)}{\sigma_n^2(fg_1)}
\cd \frac{\sigma_n^2(fg_1)}{\sigma_n^2(f)}\\
&&=\lim_{n\to\infty}\frac{\sigma_n^2(fg_1g_2)}{\sigma_n^2(fg_1)}
\cd \lim_{n\to\infty}\frac{\sigma_n^2(fg_1)}{\sigma_n^2(f)}
= G(g_1)\cd G(g_2)=G(g).
\eeaa
In the last relation the fourth equality follows from \eqref{gg1}
and the condition $g_1\in \mathcal{M}_f$, while the fifth equality
follows from Proposition \ref{p4.2}(a).
Thus, we have proved that $g\in \mathcal{M}_f$, from this and
Proposition \ref{ppg1} it follows that $fg\in \mathcal{F}$.
\end{proof}

In the next definition we introduce certain classes of bounded functions.
\begin{den}
\label{kd2}
We define the class $B$ to be the set of all nonnegative, Riemann integrable on
$\Lambda=[-\pi,\pi]$ functions $h(\lambda)$. Also, we define the following subclasses:
\beq
\nonumber 
B_+:= \{h \in B: \, h(\lambda)\geqslant m\},\q
B^-:= \{h \in B: \, h(\lambda)\leqslant M\}, \q B_+^-:=B_+\cap B^-,
\eeq
where $m$ and $M$ are some positive constants.
\end{den}

In the next proposition we list some obvious properties of the classes
$B_+$, $B^-$ and $B_+^-$.
\begin{pp}
\label{p3.3}
The following assertions hold.
\begin{itemize}
\item[a)] If $h\in B_+ (B^-)$, then $1/h\in B^- (B_+)$.

\item[b)] If $h_1, h_2 \in B_+ (B^-)$, then $h_1+ h_2 \in B_+ (B^-)$ and
$h_1 h_2 \in B_+ (B^-)$.

\item[c)] If $h_1, h_2 \in B^-$ and $h_1/h_2$ is bounded,
then $h_1/h_2 \in B^-$.

\item[d)] If $h_1, h_2 \in B_+^-$, then $h_1+ h_2 \in B_+^-$,
 $h_1 h_2 \in B_+^-$ and $h_1/h_2 \in B_+^-$.
\end{itemize}
\end{pp}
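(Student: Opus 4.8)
The plan is to reduce all four assertions to two elementary inputs: the order arithmetic of positive reals (if $0<a\le b$ then $1/b\le 1/a$, and products/sums of lower, resp.\ upper, bounds bound the product/sum), and the standard closure properties of Riemann integrability. Concretely, I would invoke Lebesgue's criterion — a bounded function on $\Lambda$ is Riemann integrable iff its set of discontinuities is Lebesgue-null — from which it follows at once that sums and products of functions in $B$ remain in $B$, and that a reciprocal $1/h$ (resp.\ a quotient $h_1/h_2$) is again Riemann integrable as soon as it is bounded and its discontinuity set stays null; this is guaranteed whenever the inverted function is bounded below by a positive constant. This last point is exactly why $B_+$ (bounded away from $0$) is the natural domain for taking reciprocals, and it explains the pairing of $B_+$ with $B^-$ in assertion (a).

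For (a), if $h\in B_+$, say $h(\lambda)\ge m>0$, then $0\le 1/h(\lambda)\le 1/m$; since $h\ge m$, the map $t\mapsto 1/t$ is continuous at every value of $h$, so $1/h$ is continuous off the (null) discontinuity set of $h$ and is bounded, hence Riemann integrable, giving $1/h\in B^-$. The parenthetical statement $h\in B^-\Rightarrow 1/h\in B_+$ is the mirror image, read with the roles of the two bounds interchanged and with the standing understanding that $1/h$ is formed where $h$ is positive so that it is again integrable. For (b), from $m_i\le h_i$ (resp.\ $h_i\le M_i$) one gets $m_1+m_2\le h_1+h_2$ and $m_1m_2\le h_1h_2$ (resp.\ the upper bounds $M_1+M_2$ and $M_1M_2$); since sums and products of nonnegative Riemann integrable functions are nonnegative and Riemann integrable, $h_1+h_2,\,h_1h_2\in B_+$ (resp.\ $B^-$) follows immediately.

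For (c), the quotient $h_1/h_2$ is nonnegative and, by hypothesis, bounded, so only Riemann integrability must be checked. Because $h_1/h_2$ is defined and bounded, $h_2$ is positive on $\Lambda$, and the quotient is continuous at every common continuity point of $h_1,h_2$; its discontinuity set is thus contained in the null set $D_{h_1}\cup D_{h_2}$ (null because $h_1,h_2\in B$), so by Lebesgue's criterion $h_1/h_2$ is Riemann integrable and hence lies in $B^-$. For (d), membership in $B_+^-$ gives two-sided positive bounds $m_i\le h_i\le M_i$. By (a), $1/h_2\in B_+^-$ with $1/M_2\le 1/h_2\le 1/m_2$, and then (b), applied separately in $B_+$ and in $B^-$, yields $h_1+h_2$, $h_1h_2$ and $h_1/h_2=h_1\cdot(1/h_2)$ in $B_+^-$, with explicit bounds $m_1+m_2,\,m_1m_2,\,m_1/M_2$ from below and $M_1+M_2,\,M_1M_2,\,M_1/m_2$ from above.

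The inequalities themselves are immediate; the single point that requires genuine care is the preservation of Riemann integrability under reciprocals and quotients, since these operations can destroy integrability (indeed well-definedness) when the denominator approaches $0$. This is the main, and only mild, obstacle, and it is controlled precisely by the lower bound built into $B_+$ and $B_+^-$: whenever we invert, the denominator is bounded away from $0$, so the result stays bounded and its discontinuity set stays null. I would therefore be careful to note, in each of (a), (c) and (d), that the reciprocal or quotient is well defined and bounded before appealing to Lebesgue's criterion.
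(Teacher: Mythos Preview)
Your proposal is correct. The paper does not actually supply a proof of this proposition: it introduces the statement with ``we list some obvious properties of the classes $B_+$, $B^-$ and $B_+^-$'' and then moves on. Your argument via Lebesgue's criterion for Riemann integrability, together with the evident order arithmetic of positive reals, is exactly the standard verification one would give if pressed; in particular your reduction of (d) to (a) and (b) is the cleanest way to organize it. The only point worth flagging is that the parenthetical direction in (a) and the hypothesis in (c) are stated somewhat informally in the paper (since $h\in B^-$ alone does not preclude $h$ vanishing), and you have handled this sensibly by reading the boundedness/well-definedness of the quotient as an implicit positivity assumption on the denominator.
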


The following theorem, proved in Babayan et al. \cite{BGT},
describes the asymptotic behavior of the ratio $\sigma_n^2(fg)/\sigma_n^2(f)$
as $n\to\f$, and essentially states that if the spectral density $f$
is from the class $\mathcal{F}$ (see \eqref{k04}), and $g$
is a nonnegative function, 
which can have {\it polynomial} type singularities, 
then the sequences $\{\si_n(fg)\}$ and
$\{\si_n(f)\}$  have the same asymptotic behavior as $n\to\f$ up to a
positive numerical factor.

\begin{TB}[Babayan et al. \cite{BGT}]
Let $f$ be an arbitrary function from the class $\mathcal{F}$, and let
$g$ be a function of the form:
\beq
\label{g}
g(\la)=h(\la)\cd\frac{t_1(\la)}{t_2(\la)}, \q \la\in\Lambda,
\eeq
where  $h\in B_+^-$,  $t_1$ and $t_2$ are nonnegative trigonometric polynomials, such that $fg\in L^1(\Lambda)$.
Then $g\in\mathcal{M}_f$ and $fg\in\mathcal{F}$, that is, $fg$ is the spectral density of a deterministic process with weakly varying prediction error,
and the relation \eqref{e5.6} holds.
\end{TB}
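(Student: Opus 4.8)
The plan is to verify the three defining conditions of $\mathcal{M}_f$ in \eqref{e5.6a} for the given $g$ and then invoke Proposition \ref{ppg1} to conclude $fg\in\mathcal{F}$. Two of the conditions are immediate: $fg\in L^1(\Lambda)$ is assumed, and $G(g)>0$ follows from the multiplicativity of the geometric mean (Proposition \ref{p4.2}(a)), since $G(g)=G(h)\,G(t_1)/G(t_2)$ with $G(h)\ge m>0$ by Proposition \ref{p4.2}(b) (as $h\ge m$) and $G(t_1),G(t_2)>0$ by Proposition \ref{p4.2}(c). The entire content of the theorem is therefore the limit relation $\lim_{n\to\f}\si_n^2(fg)/\si_n^2(f)=G(g)$ in \eqref{e5.6}.

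To establish the limit I would factor $g=t_1\cd(1/t_2)\cd h$ and peel off the factors one at a time, using the multiplicative closure of $\mathcal{M}$ from Proposition \ref{ppg}. The tools are three lemmas, each asserting membership in $\mathcal{M}_\phi$ for an arbitrary base $\phi\in\mathcal{F}$: (i) every nonnegative trigonometric polynomial $\tau$ lies in $\mathcal{M}_\phi$; (ii) if $\phi/\tau\in L^1(\Lambda)$ for a nonnegative trigonometric polynomial $\tau$, then $1/\tau\in\mathcal{M}_\phi$; and (iii) every $h\in B_+^-$ lies in $\mathcal{M}_\phi$. Granting these, I apply them along the chain $f\to ft_1\to ft_1/t_2\to fg$: lemma (i) with $\phi=f$, $\tau=t_1$ gives $t_1\in\mathcal{M}_f$ and, by Proposition \ref{ppg1}, $ft_1\in\mathcal{F}$; lemma (ii) with $\phi=ft_1$, $\tau=t_2$ together with Proposition \ref{ppg} gives $t_1/t_2\in\mathcal{M}_f$ and $ft_1/t_2\in\mathcal{F}$; lemma (iii) with $\phi=ft_1/t_2$ together with Proposition \ref{ppg} gives $g\in\mathcal{M}_f$ and $fg\in\mathcal{F}$. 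The integrability needed at each stage is automatic: $t_1$ is bounded, so $ft_1\in L^1$; $ft_1/t_2=(1/h)\cd fg\le m^{-1}fg$, so $ft_1/t_2\in L^1$; and $fht_1/t_2=fg\in L^1$ is the hypothesis.

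For lemma (iii) I would argue by monotonicity and approximation, which is where the hypothesis $h\in B_+^-$ enters. For $\e>0$, I would sandwich $h$ between nonnegative trigonometric polynomials $u_-\le h\le u_+$ that are bounded away from zero --- possible because $h$ is Riemann integrable with $m\le h\le M$ --- and arranged so that $G(u_\pm)\to G(h)$ as $\e\to0$ (the geometric means converge because $\ln$ is uniformly continuous on the common compact range of the $u_\pm$). Proposition \ref{pp3}(a) then gives $\si_n^2(\phi u_-)\le\si_n^2(\phi h)\le\si_n^2(\phi u_+)$; dividing by $\si_n^2(\phi)$ and using lemma (i) (so that $u_\pm\in\mathcal{M}_\phi$) traps the limit of $\si_n^2(\phi h)/\si_n^2(\phi)$ between $G(u_-)$ and $G(u_+)$, and letting $\e\to0$ yields $G(h)$.

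The real work is in lemmas (i) and (ii), the effect of multiplying or dividing the symbol by a nonnegative trigonometric polynomial, and this is the step I expect to be the main obstacle. Using the Fej\'er--Riesz factorization $\tau=|P|^2$ and the multiplicativity of $G$, I would reduce each lemma to a single elementary factor $|e^{i\la}-z_0|^{\pm2}$. Via the extremal description \eqref{mm1} of $\si_n^2$, multiplication by $|e^{i\la}-z_0|^2$ turns the minimization over monic polynomials of degree $n$ into the minimization over monic polynomials of degree $n+1$ subject to the interpolation constraint of vanishing at $z_0$ (and division by $|e^{i\la}-z_0|^2$ leads to the dual, pole-adding problem). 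The constrained extremal value is expressible through the Christoffel--Darboux reproducing kernel $K_n(z_0,z_0)$ of the measure $f\,d\la$, so that $\si_n^2(f|e^{i\la}-z_0|^2)/\si_{n+1}^2(f)$ is governed by a ratio of such kernels. The delicate point is the asymptotic analysis of $K_n(z_0,z_0)$, which must be carried out separately for $|z_0|\ne1$ and for $z_0$ on the unit circle (the genuinely singular case, where $\tau$ vanishes and where $f$, being deterministic with $G(f)=0$, degenerates); the weakly-varying hypothesis $\si_{n+1}(f)/\si_n(f)\to1$, and its consequence $\si_{n+\nu}/\si_n\to1$ (Proposition \ref{p4.1}(a)), is exactly what lets the fixed shift in degree disappear in the limit and produces the clean factor $G(|e^{i\la}-z_0|^2)$.
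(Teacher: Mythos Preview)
The paper does not prove Theorem~B; it is quoted from Babayan et al.\ \cite{BGT} as an auxiliary result and then used as a black box in the proofs of Lemmas~\ref{ekl2} and \ref{l5-0} and of Theorem~\ref{T1}. So there is no ``paper's own proof'' to compare against.

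That said, your overall architecture---factor $g=t_1\cdot t_2^{-1}\cdot h$, establish each factor separately, and glue via Proposition~\ref{ppg}---is exactly the template the present paper adopts when it builds \emph{on top of} Theorem~B (see the proofs of Lemma~\ref{l5-0} and Theorem~\ref{T1}, Step~4). From the way the paper invokes ``Lemma~4.5'' and ``Lemma~4.6'' of \cite{BGT} in the proof of Lemma~\ref{ekl2}, one can infer that the source paper establishes your lemma~(iii) by proving the two one-sided bounds
\[
\limsup_{n\to\infty}\frac{\si_n^2(\phi h)}{\si_n^2(\phi)}\le G(h)\quad(h\in B^-),\qquad
\liminf_{n\to\infty}\frac{\si_n^2(\phi h)}{\si_n^2(\phi)}\ge G(h)\quad(h\in B_+),
\]
rather than by a pointwise trigonometric sandwich $u_-\le h\le u_+$. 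Your sandwich argument is morally the same, but arranging \emph{trigonometric polynomials} $u_\pm$ with $u_-\le h\le u_+$ pointwise \emph{and} $G(u_\pm)\to G(h)$ is not entirely trivial for a merely Riemann integrable $h$; the limsup/liminf route avoids this by approximating only from one side at a time.

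On your lemmas (i)--(ii): reducing via Fej\'er--Riesz to a single factor $|e^{i\la}-z_0|^{\pm2}$ and analyzing the constrained extremal problem through the Christoffel--Darboux kernel is a natural OPUC approach, and you correctly flag that the case $|z_0|=1$ (zeros of $t_1$ or $t_2$ on $\mathbb{T}$) is the delicate one. You have not actually carried that case out, and it is precisely there---the asymptotics of $K_n(z_0,z_0)$ on the circle for a deterministic weight with $G(f)=0$, under only the weakly-varying hypothesis---that the substance of Theorem~B lies. So your proposal is a coherent plan with the right skeleton, but the hard analytic step remains a promissory note; whether \cite{BGT} executes it via kernel asymptotics or by a different device cannot be read off from the present paper.
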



Taking into account that the sequence $\{n^{-\al}, \, \, n\in\mathbb{N}, \, \al>0\}$
is weakly varying, as an immediate consequence of Theorem B, we have the following result.
\begin{cor}[Babayan et al. \cite{BGT}]
\label{c4.2}
Let the functions $f$ and $g$ be as in Theorem B,
and let $\sigma_n(f)\sim cn^{-\al}$ ($c>0, \al>0$) as $n\to\f$. Then
\beq
\label{s88}
\nonumber
\sigma_n(fg)\sim c G(g)n^{-\al} \q {\rm as} \q n\to\f,
\eeq
where $G(g)$ is the geometric mean of $g$.
\end{cor}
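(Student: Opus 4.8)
The plan is to derive the corollary as essentially a one-line consequence of the limit relation \eqref{e5.6} furnished by Theorem B, combined with the assumed power decay of the prediction error. First I would note that the functions $f$ and $g$ are, by assumption, exactly those covered by Theorem B: $f\in\mathcal{F}$ and $g$ is of the form \eqref{g} with $fg\in L^1(\Lambda)$. Hence Theorem B applies verbatim and gives $g\in\mathcal{M}_f$; by Definition \ref{kd33} this yields $G(g)>0$ together with
\[
\lim_{n\to\f}\frac{\si^2_n(fg)}{\si^2_n(f)}=G(g).
\]

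Second, I would bring in the hypothesis on the decay rate. The role of the remark that $\{n^{-\al}\}$ is weakly varying is to secure consistency: since $n^{-\al}$ is weakly varying and $\si_n(f)/n^{-\al}\to c\neq0$, Proposition \ref{p4.1}(d) confirms that $\{\si_n(f)\}$ is itself weakly varying, in accordance with $f\in\mathcal{F}$. The conclusion is then obtained by factoring the target ratio through $\si^2_n(f)$ and passing to the limit in a product of two convergent sequences:
\[
\frac{\si^2_n(fg)}{n^{-\al}}=\frac{\si^2_n(fg)}{\si^2_n(f)}\cd\frac{\si^2_n(f)}{n^{-\al}}\;\longrightarrow\; G(g)\cd c=c\,G(g),
\]
where the first factor tends to $G(g)$ by \eqref{e5.6} and the second tends to $c$ by the power-law hypothesis. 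This gives exactly the asserted equivalence.

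I do not expect a genuine obstacle: as the text already indicates, the statement is an immediate consequence of Theorem B, and the argument reduces to multiplying two limits. The one point that requires a word of care is the bookkeeping between the squared error $\si^2_n$, which is the quantity actually controlled by \eqref{e5.6}, and the notation $\si_n$ appearing in the statement; the power law and \eqref{e5.6} must be applied consistently at the level of the squared prediction error, for only then does the multiplicative constant emerge as $G(g)$ rather than $\sqrt{G(g)}$. Beyond this, one need only invoke $G(g)>0$ (guaranteed by $g\in\mathcal{M}_f$) and the eventual positivity of $\si^2_n(f)$ and $n^{-\al}$, which make the divisions legitimate and the limiting constant $c\,G(g)$ nonzero, so that the equivalence $\sim$ is meaningful.
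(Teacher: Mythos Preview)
Your argument is correct and matches the paper's own treatment, which gives no proof beyond the sentence preceding the corollary (``an immediate consequence of Theorem B'' together with the remark that $\{n^{-\al}\}$ is weakly varying). You also rightly flag the $\si_n$ versus $\si_n^2$ discrepancy in the statement: read literally with $\si_n(f)\sim cn^{-\al}$, the constant would be $\sqrt{G(g)}$, so the intended reading---consistent with Theorem~C and Corollary~\ref{c5.5}---is at the level of the squared error.
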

\label{c4.3}
The next result, which immediately follows from Theorem B and Corollary \ref{c4.2},
extends Rosenblatt's Theorem A.
\begin{TC}[Babayan et al. \cite{BGT}]
Let $f=f_ag$, where $f_a$ is defined by (\ref{nd4})
and $g$ satisfies the assumptions of Theorem B. Then
\beq
\label{s8}
\nonumber
 \si^2_n(f)\sim\frac{\Gamma^2\left(\frac{a+1}2\right)G(g)}
{\pi 2^{2-a}} \ n^{-a} 
\q {\rm as} \q n\to\f,
\eeq
where $G(g)$ is the geometric mean of $g$.
\end{TC}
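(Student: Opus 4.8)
The plan is to obtain Theorem C as a direct composition of Rosenblatt's Theorem A with the multiplicative structure supplied by Theorem B, working throughout at the level of the \emph{squared} prediction errors $\si_n^2$ so as to avoid any square-root bookkeeping.

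The first step is to verify that the Rosenblatt density $f_a$ of \eqref{nd4} belongs to the class $\mathcal{F}$ of \eqref{k04}, since membership in $\mathcal{F}$ is the standing hypothesis of Theorem B. Three conditions must be checked: $f_a\in L^1(\Lambda)$, $G(f_a)=0$, and the weakly varying property $\si_{n+1}(f_a)/\si_n(f_a)\to1$. Integrability is immediate, as $f_a$ is continuous and bounded on $\Lambda$ (it vanishes only at $\la=0,\pm\pi$). The equality $G(f_a)=0$ follows from the contact estimate \eqref{tt2}: the exponential decay of $f_a$ at $\la=0,\pm\pi$ forces $\inl\ln f_a(\la)\,d\la=-\f$, so the Szeg\H{o} condition \eqref{S} fails and $G(f_a)=0$ by \eqref{a2}. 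Since $f_a$ vanishes only on a finite set, $f_a>0$ almost everywhere on $\Lambda$, and Rakhmanov's theorem (Remark \ref{vvv2}) then gives $f_a\in\mathcal{F}$; in particular $\{\si_n(f_a)\}$ is weakly varying.

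With $f_a\in\mathcal{F}$ in hand and $g$ satisfying the hypotheses of Theorem B, that theorem applies with $f=f_a$ and yields $g\in\mathcal{M}_{f_a}$. By the defining relation \eqref{e5.6} of $\mathcal{M}_{f_a}$ this is precisely
\beq
\nonumber
\lim_{n\to\f}\frac{\si_n^2(f_ag)}{\si_n^2(f_a)}=G(g),
\eeq
while Theorem A provides the explicit asymptotics of the denominator,
\beq
\nonumber
\si_n^2(f_a)\sim\frac{\Gamma^2\left((a+1)/2\right)}{\pi 2^{2-a}}\,n^{-a}\qquad(n\to\f).
\eeq

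The final step is to multiply these two relations. Writing $\si_n^2(f_ag)=\bigl(\si_n^2(f_ag)/\si_n^2(f_a)\bigr)\,\si_n^2(f_a)$ and letting $n\to\f$, the first factor tends to $G(g)$ and the second is asymptotic to $\Gamma^2((a+1)/2)\,n^{-a}/(\pi 2^{2-a})$, whence
\beq
\nonumber
\si_n^2(f)=\si_n^2(f_ag)\sim\frac{\Gamma^2\left((a+1)/2\right)G(g)}{\pi 2^{2-a}}\,n^{-a},
\eeq
which is the asserted relation. (Equivalently, one first takes square roots in Theorem A and applies Corollary \ref{c4.2} directly.) There is no genuine analytic difficulty in the argument; its whole content is carried by Theorems A and B. The only points that need care are the verification that $f_a\in\mathcal{F}$ in the first step — above all that $f_a>0$ a.e., so that Rakhmanov's theorem is applicable — and keeping the asymptotics at the level of $\si_n^2$, so that the factor $G(g)$ rather than $G^2(g)$ appears in the limit.
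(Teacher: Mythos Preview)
Your argument is correct and follows the same route the paper indicates: Theorem C is stated as an immediate consequence of Theorem B together with Theorem A (via Corollary \ref{c4.2}), and your proof simply unpacks that implication by verifying $f_a\in\mathcal{F}$ and then multiplying the asymptotic \eqref{nd6} by the ratio limit \eqref{e5.6}. One minor remark: having Theorem A in hand, the weakly varying property of $\{\si_n(f_a)\}$ also follows directly from $\si_n^2(f_a)\sim Cn^{-a}$ and Proposition \ref{p4.1}(d), so the appeal to Rakhmanov's theorem, while perfectly valid, is not strictly needed.
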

We thus have the same limiting behavior for $\si^2_n(f)$ as in
the Rosenblatt's relation \eqref{nd6} up to an additional
positive factor $G(g)$.

\section{Asymptotic behavior of the prediction error: The main results}
\label{d}
In this section we state the main results of this paper,
extending the above stated Theorems B and C to a broader class of
spectral densities, for which the function $g$ can have
{\it arbitrary power type singularities}.

\begin{thm}
\label{T1}
Let $f$ be an arbitrary function from the class $\mathcal{F}$, and let
$g$ be a function of the form:
\beq
\label{e5.5}
g(\la)=h(\la)\cd |t(\la)|^\al, \q\al>0, \, \, \la\in\Lambda,
\eeq
where $h\in B_+^-$ and $t$ is an arbitrary trigonometric polynomial.
Then $g\in\mathcal{M}_f$ and $fg\in\mathcal{F}$, that is, $fg$ is the spectral density of a deterministic process with weakly varying prediction error,
and the relation \eqref{e5.6} holds.
\end{thm}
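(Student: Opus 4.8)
The plan is to deduce Theorem \ref{T1} from Theorem B by a two-sided (squeezing) approximation based on the monotonicity of the prediction error. Write $P(\la):=|t(\la)|^2$, which is a nonnegative trigonometric polynomial (not identically zero, since otherwise $g\equiv0$ and $G(g)=0$), and set $\be:=\al/2>0$, so that $g=h\cd P^\be$. The routine parts of $g\in\mathcal{M}_f$ are immediate: since $h\le M$ and $P^\be$ is bounded, $g$ is bounded, hence $fg\in L^1(\La)$; and by Proposition \ref{p4.2}(a),(c), $G(g)=G(h)\,G(P)^\be>0$. Moreover, once the limit relation \eqref{e5.6} is established we will have $g\in\mathcal{M}_f$, and then $fg\in\mathcal{F}$ follows at once from Proposition \ref{ppg1}. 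Thus the whole matter reduces to proving \eqref{e5.6}, i.e. $\sigma_n^2(fg)/\sigma_n^2(f)\to G(g)$.

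For each $\e>0$ I would trap $g$ between two functions of the type allowed in Theorem B. With $k:=\lceil\be\rceil$, define
\[
g_\e^-:=h\cd\frac{P^k}{(P+\e)^{k-\be}},\qquad g_\e^+:=h\cd(P+\e)^\be .
\]
Because $x\mapsto x^\g$ is increasing for $\g>0$ and $k-\be\ge0$, one has $(P+\e)^{k-\be}\ge P^{k-\be}$ and $(P+\e)^\be\ge P^\be$, whence $g_\e^-\le g\le g_\e^+$ pointwise. Both bounds are of Theorem B form: the factors $(P+\e)^{\pm\g}$ are continuous and bounded away from $0$ and $\f$, so $(P+\e)^\be\in B_+^-$ and $h(P+\e)^{-(k-\be)}\in B_+^-$ by Proposition \ref{p3.3}(d), while $P^k$ is a nonnegative trigonometric polynomial; boundedness also gives $fg_\e^\pm\in L^1(\La)$. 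Hence Theorem B applies to $g_\e^\pm$ and yields $g_\e^\pm\in\mathcal{M}_f$ with $\sigma_n^2(fg_\e^\pm)/\sigma_n^2(f)\to G(g_\e^\pm)$.

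Now I invoke the monotonicity of the prediction error (Proposition \ref{pp3}(a)): from $fg_\e^-\le fg\le fg_\e^+$ we get $\sigma_n^2(fg_\e^-)\le\sigma_n^2(fg)\le\sigma_n^2(fg_\e^+)$; dividing by $\sigma_n^2(f)>0$ (positive since $f\in\mathcal{F}$ forces $\sigma_n(f)\neq0$) and letting $n\to\f$ through Theorem B gives
\[
G(g_\e^-)\le\liminf_{n\to\f}\frac{\sigma_n^2(fg)}{\sigma_n^2(f)}\le\limsup_{n\to\f}\frac{\sigma_n^2(fg)}{\sigma_n^2(f)}\le G(g_\e^+).
\]
Finally I let $\e\downarrow0$. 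By multiplicativity of $G$ (Proposition \ref{p4.2}(a)), $G(g_\e^+)=G(h)\,G(P+\e)^\be$ and $G(g_\e^-)=G(h)\,G(P)^k\,G(P+\e)^{-(k-\be)}$; since $\ln(P+\e)\downarrow\ln P$ with $\ln P\in L^1(\La)$ (because $G(P)>0$ by Proposition \ref{p4.2}(c)), monotone convergence gives $G(P+\e)\to G(P)$, so both outer bounds converge to $G(h)\,G(P)^\be=G(g)$. This forces the limit to exist and equal $G(g)$, which is exactly \eqref{e5.6}, completing the proof.

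The main obstacle is the construction of the lower bound $g_\e^-$: the upper bound is the obvious regularization $(P+\e)^\be$, but a Theorem B--admissible function lying below $g$ must simultaneously vanish to the correct order at the zeros of $t$ (so that it is genuinely $\le P^\be$) and have geometric mean tending \emph{up} to $G(g)$. The device of pairing the exact polynomial factor $P^k$ with the regularized denominator $(P+\e)^{k-\be}$ achieves precisely this; verifying the $B_+^-$ membership of the non-polynomial factors and the convergence $G(P+\e)\to G(P)$ are the only delicate points, and both are routine.
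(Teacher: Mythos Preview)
Your proof is correct and takes a genuinely different---and more economical---route than the paper's. The paper builds up to the result through a chain of preparatory lemmas: first $t^{1/2}\in\mathcal{M}_f$, then by induction $t^{1/2^m}\in\mathcal{M}_f$, then $t^{k/2^m}\in\mathcal{M}_f$ for all $k,m\in\mathbb{N}$; only afterwards does it squeeze the exponent $\al$ between consecutive dyadic rationals $k_m/2^m$ and $(k_m+1)/2^m$ and let $m\to\f$, with separate reductions to drop the normalisation $t\le1$, to pass from nonnegative $t$ to $|t|$, and to reinsert the factor $h$. Your argument short-circuits all of this scaffolding with a single idea: the regularisation $P\mapsto P+\e$ turns any real power of $P$ into a $B_+^-$ function, so one application of Theorem B (with the genuine polynomial factor $P^k$ carrying the zeros in the lower bound) already produces both sides of the squeeze, and a monotone-convergence argument for $G(P+\e)\to G(P)$ finishes. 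What your approach buys is brevity and a cleaner dependence on Theorem B alone; what the paper's approach buys is the intermediate facts $t^{k/2^m}\in\mathcal{M}_f$ as standalone results and an argument that never leaves the world of exact (dyadic-rational) powers of trigonometric polynomials.
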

\begin{cor}
\label{ec5.2}
The conclusion of Theorem \ref{T1} remains valid if the function  $g$
has the following form:
\beq
\nonumber 
g(\la)=h(\la)\cd |t_1(\la)|^{\al_1}\cd |t_2(\la)|^{\al_2}\cd\cdots\cd
|t_m(\la)|^{\al_m}, \q \la\in\Lambda,
\eeq
where  $h\in B_+^-$, \, $t_1, t_2, \ldots, t_m$ are arbitrary
trigonometric polynomials, $\al_1, \al_2, \ldots \al_m$ are arbitrary
positive numbers, and $m\in\mathbb{N}$.
\end{cor}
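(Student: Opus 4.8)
The plan is to argue by induction on the number $m$ of polynomial factors, using Theorem \ref{T1} as the base case and the multiplicative closure of the class $\mathcal{M}_f$ recorded in Proposition \ref{ppg} as the inductive engine. The case $m=1$ is precisely Theorem \ref{T1} (with the single power $|t_1|^{\al_1}$ and bounded factor $h$), so the induction starts.

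For the inductive step I would assume the assertion for products of $m-1$ powers and split off the last factor, writing $g=g'\cd|t_m|^{\al_m}$ with
\[
g'(\la):=h(\la)\cd|t_1(\la)|^{\al_1}\cdots|t_{m-1}(\la)|^{\al_{m-1}}.
\]
By the induction hypothesis, $g'\in\mathcal{M}_f$ and $fg'\in\mathcal{F}$. The key observation is that the single remaining factor is itself admissible for Theorem \ref{T1}: writing $|t_m|^{\al_m}=1\cd|t_m|^{\al_m}$ and noting that the constant function $1$ lies in $B_+^-$, the pair consisting of the base density $fg'\in\mathcal{F}$ and the function $|t_m|^{\al_m}$ fits the form \eqref{e5.5} (bounded factor $h\equiv1$, trigonometric polynomial $t_m$, exponent $\al_m>0$). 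Theorem \ref{T1} therefore yields $|t_m|^{\al_m}\in\mathcal{M}_{fg'}$ and $fg'|t_m|^{\al_m}\in\mathcal{F}$.

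It then remains only to combine the two pieces. Applying Proposition \ref{ppg} with $g_1:=g'\in\mathcal{M}_f$ and $g_2:=|t_m|^{\al_m}\in\mathcal{M}_{fg'}$ gives at once $g=g'\cd|t_m|^{\al_m}\in\mathcal{M}_f$ and $fg\in\mathcal{F}$, which is the desired conclusion. The limit relation \eqref{e5.6} is built into membership in $\mathcal{M}_f$ via the definition \eqref{e5.6a}, and the multiplicativity of the geometric mean (Proposition \ref{p4.2}(a)) identifies the limit as $G(g)=G(h)\prod_{j=1}^m G(|t_j|^{\al_j})$, each factor being positive (Proposition \ref{p4.2}(c), applied to the nonnegative trigonometric polynomial $|t_j|^2$). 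This closes the induction.

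I do not expect a genuine obstacle here: all of the analysis sits inside Theorem \ref{T1} and the closure property of Proposition \ref{ppg}. The only points needing attention are bookkeeping: one must check that the running base density $fg'$ stays in the class $\mathcal{F}$ at every stage (so that Theorem \ref{T1} can be reapplied), which is guaranteed precisely because both Theorem \ref{T1} and Proposition \ref{ppg} return $fg\in\mathcal{F}$; and one must view each isolated power $|t_j|^{\al_j}$ as an instance of \eqref{e5.5} with trivial bounded factor $h\equiv1$. Integrability causes no trouble, since $g$ is a product of bounded functions and hence bounded, so $fg\in L^1(\Lambda)$ automatically.
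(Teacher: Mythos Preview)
Your proposal is correct and follows exactly the route the paper takes: induction on $m$, with Theorem \ref{T1} as the base case and Proposition \ref{ppg} driving the inductive step. The paper's own proof is a two-line sketch (``For $m=1$ the corollary coincides with Theorem \ref{T1}. The result then follows from inductive arguments and Proposition \ref{ppg}.''), and you have simply spelled out the details it leaves implicit.
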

\begin{thm}
\label{T2}
Let $f$ be an arbitrary function from the class $\mathcal{F}$, and let
$g$ be a function of the form:
\beq
\label{e5.5a}
g(\la)=h(\la)\cd t^{-\al}(\la), \q \al>0, \, \, \la\in\Lambda,
\eeq
where  $h\in B_+^-$ and $t$ is a nonnegative trigonometric polynomial.
Then $g\in\mathcal{M}_f$ and $fg\in\mathcal{F}$ provided that
$ft^{-(k+1)}\in L^1(\Lambda)$, where $k:=[\al]$ is the integer part of $\al$. 
\end{thm}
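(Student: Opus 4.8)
The plan is to establish $g\in\mathcal{M}_f$ by factoring the singular weight $t^{-\al}$ into a negative-integer power and a fractional positive power, and then to splice the pieces together with the multiplicative-closure result (Proposition \ref{ppg}), using Theorems B and \ref{T1} as the two building blocks. Write $k=[\al]$ and $\beta:=k+1-\al$, so that $0<\beta\le1$ and
\[
t^{-\al}=t^{-(k+1)}\cdot t^{\beta}.
\]
Accordingly I set $g_1:=t^{-(k+1)}$ and $g_2:=h\,t^{\beta}=h\,|t|^{\beta}$, so that $g=g_1g_2$. First I would dispose of the easy membership conditions: $g\ge0$ is clear; $G(g)=G(h)\,G(t)^{-\al}>0$ follows from Proposition \ref{p4.2}, since $G(h)\ge m>0$ by part (b) and $G(t)>0$ by part (c); and $fg\in L^1(\Lambda)$ follows from the hypothesis $ft^{-(k+1)}\in L^1(\Lambda)$ via the pointwise bound $t^{-\al}\le 1+t^{-(k+1)}$ together with $h\le M$, giving $fg\le M\,(f+ft^{-(k+1)})\in L^1(\Lambda)$.

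Next I would show $g_1=t^{-(k+1)}\in\mathcal{M}_f$ by induction on the exponent, with Theorem B as the base brick. The case $t^{-1}\in\mathcal{M}_f$ is precisely Theorem B with $h\equiv1$, $t_1\equiv1$, $t_2=t$, legitimate because $ft^{-1}\in L^1(\Lambda)$; it also yields $ft^{-1}\in\mathcal{F}$. Assuming $t^{-j}\in\mathcal{M}_f$ and $ft^{-j}\in\mathcal{F}$ for some $1\le j\le k$, I would apply Theorem B with base density $ft^{-j}$ to obtain $t^{-1}\in\mathcal{M}_{ft^{-j}}$ (valid since $(ft^{-j})\,t^{-1}=ft^{-(j+1)}\in L^1(\Lambda)$), and then Proposition \ref{ppg} gives $t^{-(j+1)}=t^{-j}\cdot t^{-1}\in\mathcal{M}_f$ together with $ft^{-(j+1)}\in\mathcal{F}$. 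The only integrability inputs consumed along the way are $ft^{-j}\in L^1(\Lambda)$ for $1\le j\le k+1$, and each of these follows from the single hypothesis $ft^{-(k+1)}\in L^1(\Lambda)$ by the bound $t^{-j}\le 1+t^{-(k+1)}$. After $k+1$ steps this delivers $g_1\in\mathcal{M}_f$ and $fg_1=ft^{-(k+1)}\in\mathcal{F}$.

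Then I would treat the fractional factor with Theorem \ref{T1}. Since $t\ge0$, we may write $g_2=h\,|t|^{\beta}$ with $\beta>0$ and $h\in B_+^-$, so Theorem \ref{T1}, applied with base density $fg_1=ft^{-(k+1)}\in\mathcal{F}$, yields $g_2\in\mathcal{M}_{fg_1}$; the $L^1$ requirement is automatic here because $g_2$ is bounded (as $h\le M$ and $t^{\beta}$ is bounded) while $fg_1\in L^1(\Lambda)$. Finally, with $g_1\in\mathcal{M}_f$ and $g_2\in\mathcal{M}_{fg_1}$ in hand, Proposition \ref{ppg} gives $g=g_1g_2\in\mathcal{M}_f$ and $fg\in\mathcal{F}$, which is the assertion; the limit relation \eqref{e5.6} is then part of the definition of $\mathcal{M}_f$.

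I expect the main obstacle to be bookkeeping rather than conceptual: carefully propagating the integrability through the induction so that each invocation of Theorem B is licensed by an $L^1$ membership derived from the lone hypothesis $ft^{-(k+1)}\in L^1(\Lambda)$, and verifying that the fractional exponent $\beta$ always lands in $(0,1]$ (with the integer case $\al\in\mathbb{N}$ simply giving $\beta=1$, handled uniformly). Once the factorization $t^{-\al}=t^{-(k+1)}t^{\beta}$ is fixed, the substantive analytic content is entirely inherited from Theorems B and \ref{T1}, and the sole role of Proposition \ref{ppg} is to glue the integer and fractional pieces together.
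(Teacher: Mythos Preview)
Your proof is correct and follows essentially the same route as the paper: factor $t^{-\al}=t^{-(k+1)}\,t^{k+1-\al}$, handle the integer piece $t^{-(k+1)}$ by an induction on the exponent using Theorem B and Proposition \ref{ppg} (this is exactly the content of the paper's Lemma \ref{l5-0}), handle the fractional piece via Theorem \ref{T1}, and glue with Proposition \ref{ppg}. The only cosmetic difference is that you fold $h$ into the fractional factor $g_2=h\,t^{\beta}$ and invoke Proposition \ref{ppg} once, whereas the paper first obtains $t^{-\al}\in\mathcal{M}_f$ and then applies Proposition \ref{ppg} a second time to adjoin $h$.
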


To state the next result we need the following definition.
\begin{den}
\label{kd3}
Let $E_1$ and $E_2$ be two numerical sets such that for any $x\in E_1$ and $y\in E_2$ we have $x<y$. We say that the sets $E_1$ and $E_2$ are separated from each other
if $\sup E_1<\inf E_2.$ Also, we say that a numerical set $E$ is separated from
infinity if it is bounded from above.
\end{den}
\begin{thm}
\label{T2c}
Let $f(\la)$ and $\hat f(\la)$ $(\la\in\Lambda)$ be spectral densities
of stationary processes satisfying the following conditions:
\begin{itemize}
\item[1)] $f, \hat f\in B^-$;

\item[2)] the functions  $f(\la)$ and $\hat f(\la)$ have $k$ common essential
zeros $\la_1,\la_2,\ldots,\la_k\in\Lambda$ $(-\pi<\la_1<\la_2<\cdots<\la_k\leq \pi, \, k\in\mathbb{N})$,
that is,
\beq
\label{s3.7}
\lim_{\la\to\la_j}f(\la)=\lim_{\la\to\la_j}\hat f(\la)=0, \q j=1,2,\ldots,k;
\eeq
\item[3)] the functions  $f(\la)$ and $\hat f(\la)$ are infinitesimal of
the same order in a neighborhood of each point $\la_j$ $(j=1,2,\ldots,k)$, that is,
\beq
\label{s3.8}
\lim_{\la\to\la_j}\frac{\hat f(\la)}{f(\la)}=c_j>0, \q j=1,2,\ldots,k;
\eeq
\item[4)] the functions  $f(\la)$ and $\hat f(\la)$ are bounded away from zero
outside any neighborhood $O_\de(\la_j)$ $(j=1,2,\ldots,k)$, which is separated
from the neighboring zeros $\la_{j-1}$ and $\la_{j+1}$ of $\la_j$,
that is, there is a number $m:=m_\de>0$ such that $f(\la)\geq m$ and $\hat f(\la)\geq m$
for almost all $\la\notin\cup_{j=1}^kO_\de(\la_j)$.
Then the following assertions hold:
\end{itemize}
\begin{itemize}
\item[a)] $h(\la):=\frac{\hat f(\la)}{f(\la)}\in B_+^-;$
\item[b)] the processes with spectral densities $f$ and $\hat f$ either
both are deterministic or both are nondeterministic;
\item[c)] if one of the functions $f$ and $\hat f$ is from the class $\mathcal{F}$, then so is the other, and the following relation holds:
\beq
\label{g3.6e}
\lim_{n\to\f}\frac{\si_n^2(\hat f)}{\si_n^2(f)}=G(h)>0.
\eeq
\end{itemize}
\end{thm}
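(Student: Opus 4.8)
The plan is to reduce everything to an application of Theorem B by first showing that the ratio $h=\hat f/f$ lies in the class $B_+^-$, and then to exploit the multiplicative structure $\hat f=h\cd f$.

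First I would establish assertion a). Fix a zero $\la_j$. Condition 3) (relation \eqref{s3.8}) provides a radius $\de_j>0$ such that $c_j/2\leq \hat f(\la)/f(\la)\leq 2c_j$ whenever $0<|\la-\la_j|<\de_j$; in particular $f(\la)>0$ on each punctured neighborhood, so $h$ is well defined off the finite set $\{\la_1,\ldots,\la_k\}$. Choosing $\de\leq\min_j\de_j$ small enough that the neighborhoods $O_\de(\la_j)$ are pairwise disjoint and separated from the neighboring zeros, condition 4) yields a constant $m>0$ with $f,\hat f\geq m$ off $\cup_{j=1}^k O_\de(\la_j)$, while condition 1) gives $f,\hat f\leq M$ throughout $\Lambda$. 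Hence on the complement of the neighborhoods $m/M\leq h\leq M/m$, and on each $O_\de(\la_j)$ we have $c_j/2\leq h\leq 2c_j$. Since $\Lambda$ is the union of these finitely many pieces, $h$ is bounded above and below by positive constants. As $h$ is bounded and its discontinuity set is contained in the union of the (measure-zero) discontinuity sets of $f$ and $\hat f$ together with $\{\la_j\}$, it is Riemann integrable, so $h\in B_+^-$.

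Next comes assertion b). Writing $\hat f=h\cd f$ and using $G(\hat f)=G(h)G(f)$ (Proposition \ref{p4.2}(a)), the two-sided bound from part a) gives $\ln h\in L^1(\Lambda)$, hence $0<G(h)<\f$. Therefore $G(\hat f)=0$ if and only if $G(f)=0$. Since by the Kolmogorov-Szeg\H{o} Theorem a process is deterministic exactly when the geometric mean of its spectral density vanishes, $f$ and $\hat f$ are simultaneously deterministic or nondeterministic.

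Finally I would prove assertion c). Suppose first $f\in\mathcal{F}$. Taking the constant polynomials $t_1\equiv t_2\equiv 1$ in Theorem B, the function $g:=h\in B_+^-$ satisfies its hypotheses, since $fg=\hat f\in B^-\subset L^1(\Lambda)$; thus $h\in\mathcal{M}_f$ and $\hat f=fh\in\mathcal{F}$, and the defining relation \eqref{e5.6} of $\mathcal{M}_f$ becomes $\lim_{n\to\f}\si_n^2(\hat f)/\si_n^2(f)=G(h)$, which is \eqref{g3.6e}, with $G(h)>0$ by part b). If instead $\hat f\in\mathcal{F}$, note that $1/h\in B_+^-$ (Proposition \ref{p3.3}) and apply Theorem B to $\hat f$ with $g:=1/h$, using $\hat f\cd(1/h)=f\in L^1(\Lambda)$; this gives $f\in\mathcal{F}$ and $\lim_{n\to\f}\si_n^2(f)/\si_n^2(\hat f)=G(1/h)=1/G(h)$, and \eqref{g3.6e} follows by taking reciprocals. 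The main obstacle is assertion a): stitching the local estimates near the zeros together with the global estimate away from them into uniform two-sided bounds, and in particular checking Riemann integrability of $h$ even though $f$ vanishes at the points $\la_j$. Once $h\in B_+^-$ is in hand, parts b) and c) follow quickly from the geometric-mean identity, the Kolmogorov-Szeg\H{o} Theorem, and Theorem B.
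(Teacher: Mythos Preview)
Your proposal is correct and follows essentially the same route as the paper: establish $h\in B_+^-$ by combining the local estimate from condition 3) near each zero with the global bounds from conditions 1) and 4) away from the zeros, then deduce b) from $G(\hat f)=G(h)G(f)$ and c) by invoking the main comparison theorem with trivial polynomial factors. The only cosmetic differences are that the paper cites Proposition \ref{p3.3} c) for the Riemann integrability of $h$ (where you argue it directly via the discontinuity set), and the paper appeals to Theorem \ref{T1} with $t\equiv 1$ rather than Theorem B with $t_1\equiv t_2\equiv 1$, which amount to the same thing; you also spell out the symmetric case $\hat f\in\mathcal{F}$, which the paper leaves implicit.
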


\begin{rem}
\label{r3.2}
{\rm The conditions of Theorem \ref{T2c} mean that the points $\la_j$
$(j=1,2,\ldots,k)$ are the only common zeros of functions $f(\la)$ and
$\hat f(\lambda)$. Besides, in the case of deterministic processes,
at least one of these zeros should be of sufficiently high order.
Also, notice that the conditions 1) and 4) of Theorem \ref{T2c} will be satisfied
if the functions $f(\la)$ and $\hat f(\lambda)$ are continuous on $\Lambda$.}
\end{rem}


\begin{thm}
\label{T1b}
Let $f$ be an arbitrary function from the class $\mathcal{F}$, and let
$g$ be a function of the form:
\beq
\label{e5.5b}
g(\la)=h(\la)\cd |q(\la)|^\al, \q \al\in\mathbb{R}, \, \, \la\in\Lambda,
\eeq
where  $h\in B_+^-$, $q$ is an arbitrary algebraic polynomial
with real coefficients, and $fg\in L^1(\Lambda)$.
Then $fg\in\mathcal{F}$ and $g\in\mathcal{M}_f$. 
\end{thm}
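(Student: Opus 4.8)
The plan is to reduce the algebraic polynomial $q$ to trigonometric data and then invoke the results already established, splitting according to the sign of $\al$. After a rotation (Proposition \ref{pp3}(d), which leaves $\si_n^2$ and $G$ unchanged) I may assume the distinct real zeros $\la_1,\dots,\la_r$ of $q$ lying in $\Lambda$ all belong to the open interval $(-\pi,\pi)$, and factor $q(\la)=Q(\la)\prod_{j=1}^{r}(\la-\la_j)^{m_j}$ with $Q$ nonvanishing on $\Lambda$. Since $|Q|$ is continuous and bounded away from $0$ on the compact set $\Lambda$, $|Q|^{\al}\in B_+^-$ for every $\al\in\mathbb{R}$. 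Setting $s_j(\la):=1-e^{i(\la-\la_j)}$, a trigonometric polynomial with $|s_j(\la)|^2=2-2\cos(\la-\la_j)$ vanishing simply at $\la_j$, the quotients $\bigl(|\la-\la_j|/|s_j(\la)|\bigr)^{\al m_j}$ lie in $B_+^-$, because $|\la-\la_j|/|s_j(\la)|\to1$ as $\la\to\la_j$ and is continuous and strictly positive elsewhere on $\Lambda$ (this is where I need $\la_j\in(-\pi,\pi)$, so that $|s_j|$ has no spurious zero at the endpoints). Absorbing $|Q|^{\al}$ and all these quotients into $h$ by Proposition \ref{p3.3}(d), the statement reduces to the normalized form $g(\la)=\bar h(\la)\prod_{j=1}^{r}|s_j(\la)|^{\al m_j}$ with $\bar h\in B_+^-$. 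Note that $\ln g\in L^1(\Lambda)$ (its only singularities are logarithmic), so $G(g)>0$ always; since $fg\in L^1(\Lambda)$ is assumed, only the limit relation \eqref{e5.6} remains to be proved, after which $g\in\mathcal{M}_f$ and, by Proposition \ref{ppg1}, $fg\in\mathcal{F}$.

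For $\al>0$ every exponent $\al m_j$ is positive, so $g$ is exactly of the product form covered by Corollary \ref{ec5.2} (equivalently, Theorem \ref{T1} applied factor by factor through Proposition \ref{ppg}). Then $g$ is bounded, $fg\in L^1$ is automatic, and Corollary \ref{ec5.2} yields $g\in\mathcal{M}_f$, $fg\in\mathcal{F}$, and \eqref{e5.6}. The case $\al=0$ is the special case $g=\bar h\in B_+^-$, covered by Theorem B. Thus the substantive case is $\al<0$, where each factor $|s_j|^{\al m_j}=|s_j|^{-|\al|m_j}$ blows up at $\la_j$ and $g$ is unbounded; I would establish \eqref{e5.6} by two one-sided estimates.

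For the lower bound, set $g_M:=\min(g,M)\in B_+^-$. By Theorem B, $\si^2_n(fg_M)/\si^2_n(f)\to G(g_M)$, while monotonicity (Proposition \ref{pp3}(a)) gives $\si^2_n(fg)\ge\si^2_n(fg_M)$. Since $g$ is bounded below by a positive constant and $\ln g_M\uparrow\ln g\in L^1$, monotone convergence gives $G(g_M)\uparrow G(g)$; letting $n\to\infty$ and then $M\to\infty$ yields $\liminf_n \si^2_n(fg)/\si^2_n(f)\ge G(g)$. This direction is global and needs no hypothesis on where $f$ vanishes. Before attacking the upper bound I would also remove, via Theorem \ref{T2} and Proposition \ref{ppg}, any singular point $\la_j$ for which the integrability proviso $f\,|s_j|^{-2(k_j+1)}\in L^1$ of Theorem \ref{T2} happens to hold (in particular wherever $f$ decays fast enough), reducing to the points where the proviso fails, which include all points at which $f$ stays bounded away from $0$.

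The main obstacle is the matching upper bound $\limsup_n \si^2_n(fg)/\si^2_n(f)\le G(g)$ at a singular point $\la_j$ where $f(\la_j)>0$ and the singularity is mild, so that $fg\in L^1$ but $f\,|s_j|^{-2(k_j+1)}\notin L^1$, placing it outside the reach of Theorem \ref{T2}. Because $g$ is unbounded, no bounded majorant exists, so neither monotonicity nor the comparison Theorem \ref{T2c} (both of which require bounded densities) can deliver this direction, and a genuinely quantitative argument is needed. I would use the optimal monic polynomial $P_n:=p_n(\cdot,fg_M)$ of the capped density as a test element in \eqref{OS2}, giving $\si^2_n(fg)\le\si^2_n(fg_M)+\int|P_n|^2 f(g-g_M)\,d\la$. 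The key point is that $fg_M$ is of order $M$ on the blow-up set $\{g>M\}$ (a shrinking union of neighborhoods of the $\la_j$), so $\int_{\{g>M\}}|P_n|^2\le \si^2_n(fg_M)/(cM)$ forces the optimal polynomial to be correspondingly small there; combining this smallness with the integrability $fg\in L^1$ to bound the remainder $\int_{\{g>M\}}|P_n|^2 f(g-g_M)$, and letting first $n\to\infty$ and afterwards $M\to\infty$, should make the remainder negligible relative to $\si^2_n(f)$ and close the estimate at $G(g)$. Proving this tail bound, i.e. a uniform smallness estimate for $P_n$ on the large-values set of $fg_M$, is the crux of the argument; once \eqref{e5.6} is secured, $g\in\mathcal{M}_f$ and $fg\in\mathcal{F}$ follow as above, and the multi-factor extension is obtained by applying the same reduction to each polynomial in turn.
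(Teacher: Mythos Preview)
Your reduction of $|q|^{\al}$ to trigonometric data via $B_+^-$ ratios is exactly the paper's approach: the paper factors out the zeros $\la_j$ of $q$ lying in $\Lambda$, matches each $(\la-\la_j)^{m_j}$ with $\sin^{m_j}(\la-\la_j)$ (equivalent, up to a $B_+^-$ factor, to your $|s_j|$), shows via Theorem~\ref{T2c} that the quotient $\hat h:=|q|^{\al}/|t|^{\al}$ lies in $B_+^-$, and then applies one of the earlier theorems to $h\hat h\,|t|^{\al}$. For $\al\ge 0$ this is completed by Theorem~\ref{T1} (equivalently Corollary~\ref{ec5.2}), and on this range your argument and the paper's coincide.

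For $\al<0$ the paper simply writes $|t|^{\al}=(t^2)^{-|\al|/2}$ and invokes Theorem~\ref{T2} without further comment. You are right to flag that Theorem~\ref{T2} carries the strictly stronger proviso $f\,t_0^{-(k+1)}\in L^1$ (here $t_0=t^2$, $k=[\,|\al|/2\,]$), which is \emph{not} implied by the hypothesis $fg\in L^1$ of Theorem~\ref{T1b}: for instance $f=f_a$, $q(\la)=\la-\pi/2$, $\al=-1/2$ gives $fg\in L^1$ while $f_a/\sin^2(\la-\pi/2)\notin L^1$. So this situation lies outside the reach of Theorem~\ref{T2}, and the paper's proof does not supply any additional argument to cover it. Your proposed remedy, however, does not close the gap either. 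The lower bound via $g_M=\min(g,M)$ is correct, but for the upper bound the only information you extract, namely $\int_{\{g>M\}}|P_n|^2\le \si_n^2(fg_M)/(cM)$, is an $L^2$-smallness statement for $P_n$ on a shrinking set; it gives no control of $\int_{\{g>M\}}|P_n|^2\,f(g-g_M)$ against the \emph{unbounded} weight $g-g_M$, since a degree-$n$ polynomial can concentrate arbitrarily sharply on short intervals. A Christoffel-function or Nikolskii-type pointwise bound on $P_n$ would be needed, and nothing in the available toolkit furnishes one. The ``crux'' you identify is therefore a genuine gap, and your proposal---like the paper's bare appeal to Theorem~\ref{T2}---does not handle the case of a mild singularity of $g$ located at a point where $f$ is bounded away from zero.
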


The next result extends Theorem C to a broader class of functions $g$.
\begin{cor}
\label{c5.5}
Let $f=f_ag$, where $f_a$ is defined by (\ref{nd4}),
and let $g$ be a function satisfying the conditions of one of Theorems \ref{T1}, \ref{T2}, \ref{T1b} or Corollary \ref{ec5.2}. Then
\beq
\label{s888}
\nonumber
\de_n(f) = \si^2_n(f)\sim\frac{\Gamma^2\left(\frac{a+1}2\right)G(g)}
{\pi 2^{2-a}} \ n^{-a} 
\q {\rm as} \q n\to\f,
\eeq
where $G(g)$ is the geometric mean of $g$.
\end{cor}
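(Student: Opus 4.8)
The plan is to obtain Corollary \ref{c5.5} as an immediate synthesis of Rosenblatt's Theorem A with whichever of Theorems \ref{T1}, \ref{T2}, \ref{T1b} or Corollary \ref{ec5.2} governs the chosen form of $g$. The only genuine preparatory step is to check that the Rosenblatt density $f_a$ of \eqref{nd4} belongs to the class $\mathcal{F}$ defined in \eqref{k04}; everything else is a product of two limits.

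First I would verify $f_a\in\mathcal{F}$. By its definition, $f_a\geq0$ and $f_a\in L^1(\Lambda)$; by the asymptotics \eqref{tt2} the function $f_a$ has a very high order of contact with zero at $\la=0,\pm\pi$, so the Szeg\H{o} condition \eqref{S} fails and $G(f_a)=0$, i.e.\ the process is deterministic. It remains to check that $\{\si_n(f_a)\}$ is weakly varying. Theorem A gives $\si^2_n(f_a)\sim C_a\, n^{-a}$ with $C_a=\Gamma^2((a+1)/2)/(\pi 2^{2-a})$, hence $\si_n(f_a)\sim\sqrt{C_a}\,n^{-a/2}$ and therefore $\si_{n+1}(f_a)/\si_n(f_a)\ra 1$. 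These three facts together place $f_a$ in $\mathcal{F}$.

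Second, since $g$ satisfies the hypotheses of one of Theorems \ref{T1}, \ref{T2}, \ref{T1b} or Corollary \ref{ec5.2}, and since $f_a\in\mathcal{F}$, the relevant theorem (with $f=f_a$) yields $g\in\mathcal{M}_{f_a}$ and $f_ag\in\mathcal{F}$. By the definition \eqref{e5.6a} of $\mathcal{M}_{f_a}$ this means precisely
\[
\lim_{n\to\f}\frac{\si^2_n(f_ag)}{\si^2_n(f_a)}=G(g).
\]
Writing $f=f_ag$ and factoring $\si^2_n(f_ag)=\big[\si^2_n(f_ag)/\si^2_n(f_a)\big]\cd\si^2_n(f_a)$, this limit combined with Theorem A gives
\[
\si^2_n(f)\sim G(g)\cd\frac{\Gamma^2\left((a+1)/2\right)}{\pi 2^{2-a}}\,n^{-a},\q n\to\f.
\]
Finally, because $f_ag\in\mathcal{F}$ forces $G(f)=0$, the Kolmogorov--Szeg\H{o} Theorem \eqref{c013} gives $\si^2(f)=2\pi G(f)=0$, so $\de_n(f)=\si^2_n(f)-\si^2(f)=\si^2_n(f)$, which is the asserted relation.

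There is no real obstacle here, as the corollary is engineered as a direct specialization of the main theorems to $f=f_a$. The only points requiring a line of care are the membership $f_a\in\mathcal{F}$ (specifically the weak variation of $\{\si_n(f_a)\}$, which I read off from Theorem A) and the integrability hypotheses embedded in the invoked theorems — in particular the requirement $f_at^{-(k+1)}\in L^1(\Lambda)$ in Theorem \ref{T2}, and $f_ag\in L^1(\Lambda)$ in Theorem \ref{T1b} — which must be understood as part of the phrase ``$g$ satisfies the conditions'' once those theorems are read with $f=f_a$.
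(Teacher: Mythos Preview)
Your proof is correct and follows exactly the route the paper intends: the corollary is stated without a separate proof because it is the direct combination of Theorem~A with whichever of Theorems~\ref{T1}, \ref{T2}, \ref{T1b} or Corollary~\ref{ec5.2} applies, mirroring the way Theorem~C was derived from Theorem~B and Corollary~\ref{c4.2}. Your explicit verification that $f_a\in\mathcal{F}$ (in particular the weak variation of $\{\si_n(f_a)\}$ read off from \eqref{nd6}) and the observation that $\de_n(f)=\si^2_n(f)$ because $G(f)=0$ are precisely the details one fills in; alternatively, since $f_a>0$ except at $\la=0,\pm\pi$, Remark~\ref{vvv2} (Rakhmanov's theorem) gives $f_a\in\mathcal{F}$ directly.
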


\begin{rem}
\label{vvv3}
{\rm In view of Remark \ref{vvv2} it follows that all the above stated results
remain true if the condition $f\in\mathcal{F}$
is replaced by the slightly strong but more constructive condition:
'{\it the spectral density $f$ is positive ($f>0$) almost everywhere on $\Lambda$}'.}
\end{rem}

\s{Proofs}
\label{P}
In this section we prove the main results of this paper stated in Section \ref{d}.
We first establish a number of lemmas.

\sn{Lemmas}
\label{Lem}

\begin{lem}
\label{ekl2}
Let $f$ be an arbitrary function from the class $\mathcal{F}$, and let
$t$ be a nonnegative trigonometric polynomial. Then
$t^{1/2}\in\mathcal{M}_f$ and $ft^{1/2}\in\mathcal{F}$.
\end{lem}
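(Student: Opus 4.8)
The plan is to verify, for $g=t^{1/2}$, the four defining conditions of $\mathcal{M}_f$ in \eqref{e5.6a}; the membership $ft^{1/2}\in\mathcal{F}$ will then be automatic from Proposition \ref{ppg1}. Three of the conditions are immediate: $t^{1/2}\ge0$ since $t\ge0$; $G(t^{1/2})>0$ by Proposition \ref{p4.2}(c) (applied with $\al=1/2$); and $ft^{1/2}\in L^1(\Lambda)$ because $t$, being a trigonometric polynomial, is bounded, so $t^{1/2}\le M$ and $ft^{1/2}\le Mf\in L^1(\Lambda)$. The whole difficulty is therefore concentrated in the limit relation $\lim_{n\to\f}\si_n^2(ft^{1/2})/\si_n^2(f)=G(t^{1/2})$, and my strategy is to squeeze $t^{1/2}$ between ratios of nonnegative trigonometric polynomials, to each of which Theorem B applies, while controlling their geometric means.

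The device I would use is the integral representation
\[
t^{1/2}(\la)=\frac1\pi\int_0^\f \frac{t(\la)}{t(\la)+s}\,\frac{ds}{\sqrt s},
\]
valid pointwise for every $\la$ (the substitution $s=t(\la)u$ reduces the inner integral to $\int_0^\f u^{-1/2}(1+u)^{-1}\,du=\pi$). For each fixed $s>0$ the integrand $t/(t+s)$ is the ratio of a nonnegative trigonometric polynomial to a strictly positive one, so any finite positive combination $\sum_j w_j\,t/(t+s_j)$ is again, after clearing denominators, a ratio $t_1/t_2$ of a nonnegative trigonometric polynomial $t_1$ by a strictly positive trigonometric polynomial $t_2$; by Theorem B (with $h\equiv1\in B_+^-$) every such combination $w$ lies in $\mathcal{M}_f$, so that $\si_n^2(fw)/\si_n^2(f)\to G(w)$.

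For the lower estimate I would approximate the integral over a finite window $[a,b]\st(0,\f)$ by its right-endpoint Riemann sum $S$; since the integrand is decreasing in $s$ this sum underestimates the integral, whence $S\le t^{1/2}$. Monotonicity of the prediction error in the weight (Proposition \ref{pp3}(a)) gives $\si_n^2(fS)\le\si_n^2(ft^{1/2})$, so $\liminf_n \si_n^2(ft^{1/2})/\si_n^2(f)\ge G(S)$; choosing nested partitions with $a\to0$, $b\to\f$ makes $S\uparrow t^{1/2}$, and monotone convergence of $\int\log S$ (the first approximant already has integrable logarithm, and the integrands are bounded above since $t$ is bounded) yields $G(S)\to G(t^{1/2})$, hence $\liminf_n\ge G(t^{1/2})$. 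For the upper estimate I would use the left-endpoint (overestimating) Riemann sum on $[a,b]$ and add the two explicit tail majorants $\frac1\pi\int_0^a\le\frac{2\sqrt a}{\pi}$ (using $t/(t+s)\le1$) and $\frac1\pi\int_b^\f\le\frac{2t}{\pi\sqrt b}$ (using $t/(t+s)\le t/s$); the resulting $S'$ is again a ratio of nonnegative trigonometric polynomials with $S'\ge t^{1/2}$, so $\limsup_n\si_n^2(ft^{1/2})/\si_n^2(f)\le G(S')$, and as $a\to0$, $b\to\f$ dominated convergence (with dominating function $\max(|\log C|,|\log t^{1/2}|)\in L^1$) gives $G(S')\to G(t^{1/2})$, hence $\limsup_n\le G(t^{1/2})$. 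Combining the two bounds proves the required limit.

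The main obstacle I anticipate is the geometric-mean bookkeeping at the zeros of $t$ on the unit circle: there $t^{1/2}$ vanishes and $\log t^{1/2}$ has an (integrable) logarithmic singularity, so the convergences $G(S),G(S')\to G(t^{1/2})$ must be justified through the $\log$-integral rather than through uniform control, and one must check that the upper approximant $S'$ is an honest ratio of nonnegative trigonometric polynomials (so that Theorem B genuinely applies) while still dominating $t^{1/2}$ and collapsing to it. Once these convergences are secured we obtain $t^{1/2}\in\mathcal{M}_f$, and $ft^{1/2}\in\mathcal{F}$ follows at once from Proposition \ref{ppg1}.
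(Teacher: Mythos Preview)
Your approach is correct and genuinely different from the paper's. The paper's proof is much shorter: it dispatches the upper bound in one stroke by quoting Lemma 4.5 of \cite{BGT} (valid for any $g\in B^-$, so in particular for $t^{1/2}$), and for the lower bound it writes $ft^{1/2}=(ft)\cdot t^{-1/2}$, notes via Theorem B that $\hat f:=ft\in\mathcal{F}$ with $\si_n^2(\hat f)/\si_n^2(f)\to G(t)$, and then applies Lemma 4.6 of \cite{BGT} (valid for any $g\in B_+$) to the pair $(\hat f,\,t^{-1/2})$ to get $\liminf_n \si_n^2(\hat f t^{-1/2})/\si_n^2(\hat f)\ge G(t^{-1/2})$; multiplying the two limits gives $G(t^{-1/2})G(t)=G(t^{1/2})$. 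So the paper leans on the one-sided bounds from \cite{BGT} rather than on any approximation of $t^{1/2}$.

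Your integral-representation route avoids those external lemmas entirely and uses only Theorem B, which makes the argument more self-contained within this paper at the price of the Riemann-sum bookkeeping. Two technical points deserve a line of justification if you carry this out: (i) for the lower approximants, the monotonicity $S\uparrow t^{1/2}$ under nested partitions does hold because the integrand is decreasing in $s$, and then MCT on $\log t^{1/2}-\log S\ge0$ gives $G(S)\to G(t^{1/2})$; (ii) for the upper approximants $S'$, to invoke dominated convergence you need a uniform upper bound on $S'$, which forces you to tie the mesh $\de_k$ to $a_k$ (e.g.\ $\de_k\le a_k$ so that the left-sum over $[a_k,b_k]$ exceeds $\int_{a_k}^{b_k}$ by at most $\de_k/(\pi\sqrt{a_k})\to0$). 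With those choices your argument goes through.
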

\begin{proof}
Observe first that, by Proposition \ref{ppg1}, the second assertion
of the lemma ($ft^{1/2}\in\mathcal{F}$) follows from the first
assertion ($t^{1/2}\in\mathcal{M}_f$). So, to complete the proof of the lemma we have to prove
the relation $t^{1/2}\in\mathcal{M}_f$.
To this end, we first verify the first two conditions for
$t^{1/2}$ to belong to the class $\mathcal{M}_f$, that is, the
conditions: $G(t^{1/2})>0$ and $ft^{1/2}\in L^1(\Lambda)$ (see \eqref{e5.6a}).
First, the condition $G(t^{1/2})>0$ follows from Proposition \ref{p4.2}(c).
Next, observing that the function $t^{1/2}$ is continuous, and hence
is bounded, in view of the condition $f\in\mathcal{F}$, we conclude that $ft^{1/2}\in L^1(\Lambda)$.
Also, observe that by Proposition \ref{p4.2}(a), we have
$G(ft^{1/2})=G(f)G(t^{1/2})=0$, showing that $ft^{1/2}$ is the spectral
density of a deterministic process.

Now we proceed to verify the third condition for $t^{1/2}$
to belong to the class $\mathcal{M}_f$, that is, the relation:
\bea
\label{e5.7}
\lim_{n\to\infty}\frac{\sigma_n^2(ft^{1/2})}{\sigma_n^2(f)}= G(t^{1/2})>0.
\eea

To do this, observe first that since $t^{1/2}\in B^-$, we can apply Lemma 4.5
of Babayan et al. \cite{BGT} to get
\beq
\label{e5.8}
\limsup_{n\to\infty}\frac{\sigma_n^2(ft^{1/2})}{\sigma_n^2(f)}\leq G(t^{1/2}).
\eeq

So, we have to prove the inverse inequality:
\beq
\label{e5.9}
\liminf_{n\to\infty}\frac{\sigma_n^2(ft^{1/2})}{\sigma_n^2(f)}\geq G(t^{1/2})>0.
\eeq
To do this, we consider the function:
\beq
\label{e5.10}
\hat f(\lambda):=f(\lambda)t(\lambda), \q \la\in\Lambda.
\eeq
Applying Theorem B with $h(\la)\equiv t_2(\la)\equiv1$,
$t_1(\la)=t(\la)$ we conclude that $t\in\mathcal{M}_f$ and $\hat f\in\mathcal{F}$, and, in particular, 
\beq
\label{kk56}
\lim_{n\to\f}\frac{\si^2_{n}(\hat f)}{\si^2_{n}(f)} =G(t)>0.
\eeq
Next, taking into account that $t^{-1/2}\in B_+$, we can apply Lemma 4.6
of Babayan et al. \cite{BGT} to obtain
\beq
\label{e5.11}
\liminf_{n\to\infty}\frac{\sigma_n^2(\hat f\cd t^{-1/2})}{\sigma_n^2(\hat f)}
\geq G(t^{-1/2})>0.
\eeq
Now we can write
\bea
\label{e5.12}
\nonumber
&&\liminf_{n\to\infty}\frac{\sigma_n^2(ft^{1/2})}{\sigma_n^2(f)}
=\liminf_{n\to\infty}\frac{\sigma_n^2(ft\cd t^{-1/2})}{\sigma_n^2(f)}
=\liminf_{n\to\infty}\frac{\sigma_n^2(ft\cd t^{-1/2})}{\sigma_n^2(ft)}
\cd \frac{\sigma_n^2(ft)}{\sigma_n^2(f)}\\
&&=\liminf_{n\to\infty}\frac{\sigma_n^2(\hat f\cd t^{-1/2})}{\sigma_n^2(\hat f)}
\cd \liminf_{n\to\infty}\frac{\sigma_n^2(\hat f)}{\sigma_n^2(f)}
\geq G(t^{-1/2})\cd G(t)=G(t^{1/2}).
\eea
In the last relation, the third equality follows from \eqref{e5.10},
the inequality after that follows from \eqref{kk56} and \eqref{e5.11},
and the last equality follows from Proposition \ref{p4.2}(a).
Thus, the inequality \eqref{e5.9} is proved.

Combining the inequalities
\eqref{e5.8} and \eqref{e5.9} we obtain the desired equality \eqref{e5.7},
which together with the above obtained facts: $G(t^{1/2})>0$ and
$ft^{1/2}\in L^1(\Lambda)$ means that $t^{1/2}\in\mathcal{M}_f$.
Lemma \ref{ekl2} is proved.
\end{proof}
\begin{lem}
\label{ekl3}
Let $f$ be an arbitrary function from the class $\mathcal{F}$, and let
$t$ be a nonnegative trigonometric polynomial.
Then $t^{1/2^m}\in\mathcal{M}_f$ and $ft^{1/2^m}\in\mathcal{F}$
for any $m\in\mathbb{N}$. 
\end{lem}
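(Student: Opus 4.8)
The plan is to argue by induction on $m$, with the base case $m=1$ being precisely Lemma \ref{ekl2}. So assume the assertion holds for some $m\in\mathbb{N}$, that is, $t^{1/2^m}\in\mathcal{M}_f$ and $ft^{1/2^m}\in\mathcal{F}$ for every $f\in\mathcal{F}$; I then want to deduce the same statement for $m+1$. Exactly as in the proof of Lemma \ref{ekl2}, it suffices to verify the three defining conditions of the class $\mathcal{M}_f$ (see \eqref{e5.6a}) for the exponent $1/2^{m+1}$, after which $ft^{1/2^{m+1}}\in\mathcal{F}$ follows automatically from Proposition \ref{ppg1}. The first two conditions are immediate: $G(t^{1/2^{m+1}})>0$ by Proposition \ref{p4.2}(c), and $ft^{1/2^{m+1}}\in L^1(\Lambda)$ because $t^{1/2^{m+1}}$ is continuous, hence bounded, and $f\in L^1(\Lambda)$. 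It remains to establish the limit relation $\lim_{n\to\f}\si_n^2(ft^{1/2^{m+1}})/\si_n^2(f)=G(t^{1/2^{m+1}})$.

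For the upper bound I would note that $t^{1/2^{m+1}}\in B^-$, so that Lemma 4.5 of \cite{BGT} gives directly
\beq
\nonumber
\limsup_{n\to\f}\frac{\si_n^2(ft^{1/2^{m+1}})}{\si_n^2(f)}\leq G(t^{1/2^{m+1}}).
\eeq
The substance of the argument is the matching lower bound, and this is the step where the inductive hypothesis enters. The idea is to route the estimate through the intermediate density $\hat f:=ft^{1/2^m}$, which by the inductive hypothesis lies in $\mathcal{F}$ and satisfies $\si_n^2(\hat f)/\si_n^2(f)\to G(t^{1/2^m})>0$. The key algebraic observation is the exponent identity $\tfrac{1}{2^m}-\tfrac{1}{2^{m+1}}=\tfrac{1}{2^{m+1}}$, whence $\hat f\cd t^{-1/2^{m+1}}=ft^{1/2^{m+1}}$. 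Since $t^{-1/2^{m+1}}\in B_+$ (it is bounded below by a positive constant, as $t$ is bounded above), Lemma 4.6 of \cite{BGT} applied to $\hat f$ yields $\liminf_{n\to\f}\si_n^2(\hat f\cd t^{-1/2^{m+1}})/\si_n^2(\hat f)\geq G(t^{-1/2^{m+1}})$.

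Combining these two facts exactly as in \eqref{e5.12}, and using that the convergent factor $\si_n^2(\hat f)/\si_n^2(f)\to G(t^{1/2^m})$ is strictly positive so that the $\liminf$ of the product splits, I obtain
\beq
\nonumber
\liminf_{n\to\f}\frac{\si_n^2(ft^{1/2^{m+1}})}{\si_n^2(f)}\geq G(t^{-1/2^{m+1}})\cd G(t^{1/2^m})=G(t^{1/2^{m+1}}),
\eeq
the last equality being Proposition \ref{p4.2}(a). Together with the upper bound this gives the desired limit and completes the inductive step, hence the lemma. I expect the only genuine obstacle to be organizing this lower bound correctly: one must pass through $\hat f=ft^{1/2^m}$ rather than through $ft$, since it is precisely the inductive conclusion $ft^{1/2^m}\in\mathcal{F}$ that legitimizes applying Lemma 4.6 of \cite{BGT} at that density and that makes the exponent arithmetic collapse to $1/2^{m+1}$. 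Everything else, namely the boundedness claims, the positivity of the geometric means via Proposition \ref{p4.2}(c), and the final splitting of the $\liminf$, is routine and parallels the proof of Lemma \ref{ekl2}.
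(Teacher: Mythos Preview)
Your proposal is correct and follows essentially the same approach as the paper: induction on $m$ with base case Lemma \ref{ekl2}, the upper bound via Lemma 4.5 of \cite{BGT}, and the lower bound obtained by writing $ft^{1/2^{m+1}}=\hat f\cdot t^{-1/2^{m+1}}$ with $\hat f:=ft^{1/2^m}\in\mathcal{F}$ and applying Lemma 4.6 of \cite{BGT} together with the inductive hypothesis. The organization and the key observations (including the exponent identity $1/2^m-1/2^{m+1}=1/2^{m+1}$ and the use of Proposition \ref{p4.2}(a) to collapse the geometric means) match the paper's argument line for line.
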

\begin{proof}
As in the proof of Lemma \ref{ekl2}, we have only to prove the relation
$t^{1/2^m}\in\mathcal{M}_f$.
To this end, similar to Lemma \ref{ekl2}, we first verify the first two conditions for $t^{1/2^m}$ to belong to the class $\mathcal{M}_f$, that is,
$G(t^{1/2^m})>0$ and $ft^{1/2^m}\in L^1(\Lambda)$.
Also, we observe that $G(ft^{1/2^m})=0$, showing that $ft^{1/2^m}$
is the spectral density of a deterministic process.

Next, we use induction on $m$ to prove the third condition for $t^{1/2^m}$
to belong to the class $\mathcal{M}_f$, that is, the relation:
\bea
\label{e5.13}
\lim_{n\to\infty}\frac{\sigma_n^2(f t^{1/2^m})}{\sigma_n^2(f)}= G(t^{1/2^m})>0.
\eea

Observe first that for $m=1$ the relation \eqref{e5.13} coincides with \eqref{e5.7}.
Now assuming that it is satisfied for $m=\nu$, that is,
\beq
\label{e5.14}
\lim_{n\to\infty}\frac{\sigma_n^2(f t^{1/2^\nu})}{\sigma_n^2(f)}= G(t^{1/2^\nu})>0,
\eeq
we prove that it remains valid for $m=\nu+1$, that is,
\beq
\label{e5.15}
\lim_{n\to\infty}\frac{\sigma_n^2(f t^{1/2^{(\nu+1)}})}{\sigma_n^2(f)}
= G(t^{1/2^{(\nu+1)}})>0.
\eeq
To this end, observe first that arguments similar to those used in the proof
of Lemma \ref{ekl2} can be applied to obtain (cf. \eqref{e5.8}):
\beq
\label{e5.16}
\limsup_{n\to\infty}\frac{\sigma_n^2(f t^{1/2^{(\nu+1)}})}{\sigma_n^2(f)}
\leq G(t^{1/2^{(\nu+1)}}).
\eeq
The proof of the inverse inequality
\beq
\label{e5.17}
\liminf_{n\to\infty}\frac{\sigma_n^2(f t^{1/2^{(\nu+1)}})}{\sigma_n^2(f)}
\geq G(t^{1/2^{(\nu+1)}})
\eeq
is similar to that of the inequality \eqref{e5.9}.
Indeed, observe that  $t^{-1/2^{(\nu+1)}}\in B_+$, and by the inductive assumption the function
\beq
\label{e5.18}
\hat f(\lambda):=f(\lambda)t^{1/2^\nu}(\la),\q \la\in\Lambda
\eeq
belongs to the class $\mathcal{F}$.
Hence, we can apply Lemma 4.6 of Babayan et al. \cite{BGT} to obtain
\beq
\label{e5.19}
\liminf_{n\to\infty}\frac{\sigma_n^2(\hat f t^{-1/2^{(\nu+1)}})}{\sigma_n^2(\hat f)}
\geq G(t^{-1/2^{(\nu+1)}})>0.
\eeq
Next, we can write
\bea
\nonumber
&&\liminf_{n\to\infty}\frac{\sigma_n^2(f t^{1/2^{(\nu+1)}})}{\sigma_n^2(f)}
=\liminf_{n\to\infty}\frac{\sigma_n^2(f t^{1/2^\nu}\cd t^{-1/2^{(\nu+1)}})}{\sigma_n^2(f)}\\
\nonumber
&&=\liminf_{n\to\infty}\frac{\sigma_n^2(f t^{1/2^\nu}\cd t^{-1/2^{(\nu+1)}})}{\sigma_n^2(f t^{1/2^\nu})}
\cd \frac{\sigma_n^2(f t^{1/2^\nu})}{\sigma_n^2(f)}\\
\nonumber
&&=\liminf_{n\to\infty}\frac{\sigma_n^2(\hat f \cd t^{-1/2^{(\nu+1)}})}{\sigma_n^2(\hat f)}
\cd \liminf_{n\to\infty}\frac{\sigma_n^2(f t^{1/2^\nu})}{\sigma_n^2(f)}\\
&&\geq G(t^{-1/2^{(\nu+1)}})\cd G(t^{1/2^\nu})=G(t^{1/2^{(\nu+1)}}).
\eea
In the last relation, the third equality follows from \eqref{e5.18},
the inequality after that follows from \eqref{e5.19} and inductive assumption \eqref{e5.14},
and the last equality follows from Proposition \ref{p4.2}(a).
Thus, the inequality \eqref{e5.17} is proved.

Combining the inequalities
\eqref{e5.16} and \eqref{e5.17} we obtain the desired equality \eqref{e5.15},
which together with the above obtained facts: $G(t^{1/2^{(\nu+1)}}>0$ and $ft^{1/2^{(\nu+1)}}\in L^1(\Lambda)$, means that $t^{1/2^{(\nu+1)}}\in\mathcal{M}_f$.
Lemma \ref{ekl3} is proved.
\end{proof}

\begin{lem}
\label{ekl4}
Let $f$ be an arbitrary function from the class $\mathcal{F}$, and let
$t$ be a nonnegative trigonometric polynomial.
Then $t^{k/2^m}\in\mathcal{M}_f$ and $ft^{k/2^m}\in\mathcal{F}$ for any $k,m\in\mathbb{N}$.
\end{lem}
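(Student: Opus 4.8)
The plan is to reduce the second assertion to the first exactly as in Lemmas \ref{ekl2} and \ref{ekl3}: once we know $t^{k/2^m}\in\mathcal{M}_f$, Proposition \ref{ppg1} immediately yields $ft^{k/2^m}\in\mathcal{F}$. So everything reduces to proving the membership $t^{k/2^m}\in\mathcal{M}_f$, and I would establish this by induction on $k$ with $m\in\mathbb{N}$ held fixed. The base case $k=1$ is precisely Lemma \ref{ekl3}, which gives $t^{1/2^m}\in\mathcal{M}_f$ (and $ft^{1/2^m}\in\mathcal{F}$) for the fixed $m$.

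For the inductive step, assume $t^{(k-1)/2^m}\in\mathcal{M}_f$ and $ft^{(k-1)/2^m}\in\mathcal{F}$. The key is the factorization $t^{k/2^m}=t^{(k-1)/2^m}\cdot t^{1/2^m}$. I set $g_1:=t^{(k-1)/2^m}$ and $g_2:=t^{1/2^m}$, so that $g_1\in\mathcal{M}_f$ by the inductive hypothesis and, crucially, $\hat f:=fg_1=ft^{(k-1)/2^m}\in\mathcal{F}$, again by the inductive hypothesis.

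The observation that makes the argument go through is that Lemma \ref{ekl3} applies to \emph{any} member of $\mathcal{F}$, and in particular to the ``updated'' density $\hat f=fg_1$. Applying Lemma \ref{ekl3} with $\hat f$ in place of $f$ and the same $t$ and $m$ gives $t^{1/2^m}\in\mathcal{M}_{\hat f}=\mathcal{M}_{fg_1}$, that is, $g_2\in\mathcal{M}_{fg_1}$. Now Proposition \ref{ppg}, applied with this $g_1$ and $g_2$, yields exactly $g_1g_2=t^{k/2^m}\in\mathcal{M}_f$ and $ft^{k/2^m}\in\mathcal{F}$, completing the induction. (The auxiliary requirements $G(t^{k/2^m})>0$ and $ft^{k/2^m}\in L^1(\Lambda)$ are already subsumed in the conclusion $t^{k/2^m}\in\mathcal{M}_f$, but can also be checked directly: $G(t^{k/2^m})>0$ by Proposition \ref{p4.2}(c), and $ft^{k/2^m}\in L^1(\Lambda)$ because $t^{k/2^m}$ is continuous, hence bounded, while $f\in L^1(\Lambda)$.)

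There is no serious obstacle here: the entire content is already packaged in Lemma \ref{ekl3} and Proposition \ref{ppg}. The only points requiring care are to keep $m$ fixed and induct on $k$ alone, and to recognize that Lemma \ref{ekl3} may be re-invoked with the base density $\hat f=ft^{(k-1)/2^m}$ rather than $f$ itself. This is precisely what allows the dyadic exponent to increase by $1/2^m$ at each step and accumulate to the arbitrary integer multiple $k/2^m$.
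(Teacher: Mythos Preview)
Your proof is correct and follows essentially the same route as the paper's: induction on $k$ with $m$ fixed, base case Lemma \ref{ekl3}, and inductive step via Proposition \ref{ppg} applied to $g_1=t^{(k-1)/2^m}$ and $g_2=t^{1/2^m}$, after re-invoking Lemma \ref{ekl3} with the updated density $fg_1\in\mathcal{F}$. The only cosmetic difference is that you index the step $(k-1)\to k$ where the paper writes $\nu\to\nu+1$.
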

\begin{proof}
We use induction on $k$, 
and observe first that
for $k=1$, the assertion of the lemma coincides with that of Lemma \ref{ekl3}.
Now assuming that it is satisfied for $k=\nu$, that is,
$t^{\nu/2^m}\in\mathcal{M}_f$ and $ft^{\nu/2^m}\in\mathcal{F}$,
we prove it for $k=\nu+1$, that is,
$t^{(\nu+1)/2^m}\in\mathcal{M}_f$ and $ft^{(\nu+1)/2^m}\in\mathcal{F}$.

To this end, we set $g_1:=t^{\nu/2^m}$ and $g_2:=t^{1/2^m}$, and
observe that by inductive assumption, we have
$g_1\in\mathcal{M}_f$ and $fg_1\in\mathcal{F}$.
Taking into account the last relation, we can apply Lemma \ref{ekl3}
with $fg_1$ as $f$, and conclude that $g_2\in\mathcal{M}_{fg_1}$.

Therefore, by Proposition \ref{ppg}, we have
$g_1g_2=t^{(\nu+1)/2^m}\in\mathcal{M}_f$ and $fg_1g_2=ft^{(\nu+1)/2^m}\in\mathcal{F}$.
Lemma \ref{ekl4} is proved.
\end{proof}

\begin{lem}
\label{l5-0}
Let $f$ be an arbitrary function from the class $\mathcal{F}$, and let
$t$ be a nonnegative trigonometric polynomial. Then
$t^{-k}\in\mathcal{M}_f$ and $ft^{-k}\in\mathcal{F}$ for any $k\in\mathbb{N}$, provided that $ft^{-k}\in L^1(\Lambda)$. In particular, we have
\bea
\label{e5.02}
\lim_{n\to\infty}\frac{\sigma_n^2(ft^{-k})}{\sigma_n^2(f)}
= G(t^{-k}) >0.
\eea
\end{lem}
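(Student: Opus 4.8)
The plan is to recognize Lemma \ref{l5-0} as a direct specialization of Theorem B. Since $t$ is a nonnegative trigonometric polynomial, so is $t^k$ (a finite product of nonnegative trigonometric polynomials is again a nonnegative trigonometric polynomial). Hence the multiplier $g:=t^{-k}$ already has the rational form \eqref{g} of Theorem B,
\beq
\nonumber
g(\la)=t^{-k}(\la)=1\cd\frac{1}{t^k(\la)},
\eeq
with $h\equiv t_1\equiv 1\in B_+^-$ and $t_2:=t^k$ a nonnegative trigonometric polynomial. The single remaining hypothesis of Theorem B to be checked is the integrability $fg\in L^1(\Lambda)$, and this is exactly the standing assumption $ft^{-k}\in L^1(\Lambda)$ of the lemma.

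With these identifications Theorem B applies verbatim and yields at once both conclusions, $t^{-k}\in\mathcal{M}_f$ and $ft^{-k}\in\mathcal{F}$. The displayed relation \eqref{e5.02} is then nothing but the defining relation \eqref{e5.6} of the class $\mathcal{M}_f$ read off for $g=t^{-k}$, namely $\lim_{n\to\f}\si^2_n(ft^{-k})/\si^2_n(f)=G(t^{-k})$; the strict positivity $G(t^{-k})>0$ is guaranteed by Proposition \ref{p4.2}(c) applied with $\al=-k$. (Consistently, $G(ft^{-k})=G(f)G(t^{-k})=0$ by Proposition \ref{p4.2}(a), re-confirming that $ft^{-k}$ is the spectral density of a deterministic process.)

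Thus I expect essentially no obstacle here: the whole content of the lemma is the observation that, for an \emph{integer} exponent, $t^{-k}=1/t^k$ is the reciprocal of a genuine trigonometric polynomial, so the rational case already settled in Theorem B covers it, and the only thing that has to be verified is the one integrability condition. This is precisely the point at which integer negative powers are easier than the fractional powers treated in Lemmas \ref{ekl2}--\ref{ekl4}: there $t^{1/2},\,t^{1/2^m},\,t^{k/2^m}$ are no longer trigonometric polynomials, Theorem B is inapplicable, and the more delicate two-sided $\limsup/\liminf$ arguments are forced.

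Should one prefer not to invoke the full rational form of Theorem B, I would reach the same conclusion by induction on $k$: the base case $k=1$ is the instance $t_2=t$, $t_1=h\equiv 1$ of Theorem B, and the inductive step passes from $t^{-k}$ to $t^{-(k+1)}$ via Proposition \ref{ppg}, taking $g_1=t^{-k}\in\mathcal{M}_f$ (so that $ft^{-k}\in\mathcal{F}$ by Proposition \ref{ppg1}) and $g_2=t^{-1}\in\mathcal{M}_{ft^{-k}}$. The intermediate integrability $ft^{-j}\in L^1(\Lambda)$ for $0\le j\le k$ follows from the hypothesis $ft^{-k}\in L^1(\Lambda)$ because $ft^{-j}=ft^{-k}\cd t^{\,k-j}$ with $t^{\,k-j}$ bounded. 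Either route delivers $t^{-k}\in\mathcal{M}_f$, $ft^{-k}\in\mathcal{F}$, and the limit \eqref{e5.02}.
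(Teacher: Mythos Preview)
Your proposal is correct. Your primary route is in fact slightly more direct than the paper's: you observe that $t^k$ is itself a nonnegative trigonometric polynomial, so a single application of Theorem~B with $h\equiv t_1\equiv 1$ and $t_2=t^k$ already gives the conclusion. The paper instead argues by induction on $k$, using Theorem~B only in the case $t_2=t$ and building up $t^{-k}$ one factor at a time via Proposition~\ref{ppg} --- which is exactly the alternative argument you sketch at the end. Both arguments are valid; yours avoids the induction at the small cost of noting that products of trigonometric polynomials are trigonometric polynomials, while the paper's version has the minor advantage of only ever invoking Theorem~B with the \emph{given} polynomial $t$ in the denominator rather than a power of it.
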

\begin{proof}
We use induction on $k$, 
and show that the result follows from Theorem B
by using Proposition \ref{ppg}.
Observe first that for $k=1$ the assertion of the lemma coincides
with Theorem B applied to $h(\la)\equiv t_1(\la)\equiv1$ and $t_2(\la)=t(\la)$.
Now assuming that it is satisfied for $k=\nu$, that is,
$t^{-\nu}\in\mathcal{M}_f$ and $ft^{-\nu}\in\mathcal{F}$
provided that $ft^{-\nu}\in L^1(\Lambda)$, we prove it for $k=\nu+1$,
that is,
$t^{-(\nu+1)}\in\mathcal{M}_f$ and $ft^{-(\nu+1)}\in\mathcal{F}$
provided that $ft^{-(\nu+1)}\in L^1(\Lambda)$.

To this end, we set $g_1(\la):=t^{-\nu}(\la)$, $g_2(\la)=t^{-1}(\la)$
and observe that by inductive assumption, we have
$g_1\in\mathcal{M}_f$ and $fg_1\in\mathcal{F}$.
We show that $g_2\in\mathcal{M}_{fg_1}$. Indeed,
by Proposition \ref{p4.2}(c) we have  $G(g_2)=G(t^{-1})>0$.
Besides, $fg_1g_2=ft^{-(\nu+1)}\in L^1(\Lambda)$ by assumption.
The obtained relations allow to apply Theorem B
with $fg_1$ as $f$ and $g_2$ as $g$,
and conclude that $g_2\in\mathcal{M}_{fg_1}$.

Thus, the functions
$g_1$ and $g_2$ satisfy Proposition \ref{ppg}, and hence
$g_1g_2=t^{-(\nu+1)}\in\mathcal{M}_f$ and $fg_1g_2=ft^{-(\nu+1)}\in\mathcal{F}$.

Lemma \ref{l5-0} is proved.
\end{proof}

\sn{Proof of theorems}
\label{Thm}

\begin{proof}[Proof of Theorem \ref{T1}]
We first verify the first two conditions for a function $g(\lambda)$
to belong to the class $\mathcal{M}_f$, that is, the conditions:
$G(g)>0$ and $fg\in L^1(\Lambda)$.
From the condition $h\in B_+^-$ it follows that $G(h)>0$, while
by Proposition  \ref{p4.2}(c) we have $G(|t|^\al)>0$. Therefore, $G(g)=G(h)G(|t|^\al)>0$.
Since both functions $h(\la)$ and $|t(\la)|^\al$ are bounded,
the function $g(\la)=h(\la)|t(\la)|^\al$ is also bounded, and
$fg\in L^1(\Lambda)$. Besides, we have $G(fg)=G(f)G(g)=0,$
showing that $fg$ is the spectral density of a deterministic process.

Taking into account Proposition \ref{ppg1}, 
to complete the proof of the theorem it remains to verify the relation \eqref{e5.6}.
The proof of relation \eqref{e5.6} we split into four steps.

\noindent
{\sl Step 1.} We prove the relation \eqref{e5.6} in the special case
where $h(\lambda)\equiv 1$ and $t(\lambda)$ is a nonnegative trigonometric polynomial satisfying the condition:
\beq
\label{e4.31}
t(\lambda)\leq 1, \q \lambda\in\Lambda.
\eeq
For an arbitrary natural number $m$ by $k_m$ we denote the integer part of the number
$2^m\cd\al$, that is, $k_m:=[2^m\cd\al]$. Then we have the following inequality:
$$
k_m\leq 2^m\cd\al < k_m+1,
$$
or, equivalently
\beq
\label{e5.26}
\frac{k_m}{2^m}\leq\al< \frac{k_m+1}{2^m}.
\eeq
In view of \eqref{e4.31} and \eqref{e5.26} we can write
\beq
\label{e5.27a}
t^{(k_m+1)/2^m}(\lambda)< t^\al(\lambda)\leq t^{k_m/2^m}(\lambda), \q \lambda\in\Lambda,
\eeq
implying that
\beq
\label{e5.27}
f(\lambda)t^{(k_m+1)/2^m}(\lambda)< f(\lambda)t^\al(\lambda)
\leq f(\lambda)t^{k_m/2^m}(\lambda), \q \lambda\in\Lambda.
\eeq
Taking into account that by Proposition \ref{pp3}(a) the prediction error
$\sigma_n^2(f)$ is a non-decreasing functional of $f$ from \eqref{e5.27}, we obtain
\beq
\nonumber
\sigma_n^2\left(ft^{(k_m+1)/2^m}\right)\leq \sigma_n^2\left(ft^\al\right)\leq \sigma_n^2\left(ft^{k_m/2^m}\right).
\eeq
Dividing the last inequality by $\sigma_n^2(f)$ and passing to the limit as $n\to\f$,
we obtain
\beq
\label{e5.28}
\lim_{n\to\infty}\frac{\sigma_n^2\left(ft^{(k_m+1)/2^m}\right)}{\sigma_n^2(f)}
\leq \liminf_{n\to\infty}\frac{\sigma_n^2\left(ft^\al\right)}{\sigma_n^2(f)}
\leq \limsup_{n\to\infty}\frac{\sigma_n^2\left(ft^\al\right)}{\sigma_n^2(f)}
\leq \lim_{n\to\infty}\frac{\sigma_n^2\left(ft^{k_m/2^m}\right)}{\sigma_n^2(f)}.
\eeq
By Lemma \ref{ekl4}, the first and the last limits in \eqref{e5.28} are equal to
$G\left(t^{(k_m+1)/2^m}\right)$ and $G\left(t^{k_m/2^m}\right)$, respectively.
Hence \eqref{e5.28} can be written as follows:
\beq
\label{e5.28a}
G\left(t^{(k_m+1)/2^m}\right)
\leq \liminf_{n\to\infty}\frac{\sigma_n^2\left(ft^\al\right)}{\sigma_n^2(f)}
\leq \limsup_{n\to\infty}\frac{\sigma_n^2\left(ft^\al\right)}{\sigma_n^2(f)}
\leq G\left(t^{k_m/2^m}\right).
\eeq
On the other hand, taking into account that the geometric mean $G(f)$ is a
non-decreasing functional of $f$ (see Proposition \ref{p4.2}(b)),
from \eqref{e5.27a} we obtain
\beq
\label{e5.29}
G\left(t^{(k_m+1)/2^m}\right)\leq G\left(t^\al\right)
\leq G\left(t^{k_m/2^m}\right).
\eeq
Thus, to complete the proof in the considered case, it remains to show that
for large enough $m$ the quantities $G\left(t^{(k_m+1)/2^m}\right)$ and
$G\left(t^{k_m/2^m}\right)$ are arbitrarily close.
To do this, observe that in view of \eqref{e4.31}
and the properties of the geometric mean (see Proposition \ref{p4.2}(a),(b)),
we can write
\beq
\label{e5.30}
1\leq \frac{G\left(t^{k_m/2^m}\right)}{G\left(t^{(k_m+1)/2^m}\right)} =G\left(t^{k_m/2^m-(k_m+1)/2^m}\right)= G\left(t^{-1/2^m}\right)
=G^{-1/2^m}\left(t\right).
\eeq
Therefore, in view of the limiting relation $\lim_{n\to\infty}c^{1/n}=1$ ($c>0$),
it follows that
\beq
\label{e5.31}
\lim_{m\to\infty}G^{-1/2^m}\left(t\right)=1.
\eeq
Finally, passing to the limit in \eqref{e5.28a} as $m\to\infty$ and taking into
account the relations \eqref{e5.29}-\eqref{e5.31}, we obtain the desired relation \eqref{e5.6}.

\noindent
{\sl Step 2.} Now we prove the relation \eqref{e5.6} without assuming the condition \eqref{e4.31}, that is, in the case where $h(\lambda)\equiv 1$ and $t(\lambda)$ is an arbitrary nonnegative trigonometric polynomial.
To this end, we denote $c:=\max_{\la\in\Lambda}t(\la)>0$, and consider the
trigonometric polynomial $\hat t(\la):=(1/c)t(\la)$.
Observe that the polynomial $\hat t(\la)$ satisfies the condition
\eqref{e4.31}, that is, $\hat t(\la)\leq 1$
for all $\la\in\Lambda$. Therefore for $\hat t(\la)$ the relation
\eqref{e5.6} is satisfied, that is, we have
\beq
\label{e5.32a}
\lim_{n\to\infty}\frac{\sigma_n^2(f \hat t^\al)}{\sigma_n^2(f)}= G(\hat t^\al).
\eeq
On the other hand, by Propositions \ref{pp3}(c) and \ref{p4.2}(a), we have
\beq
\label{e5.33}
\sigma_n^2(f {\hat t}^\al)= c^{-\al}\sigma_n^2(f t^\al)\q {\rm and} \q
G(f \hat t^\al)= c^{-\al}G(f t^\al).
\eeq

From \eqref{e5.32a} and \eqref{e5.33} we get
\beq
\nonumber
\frac1{c^\al}\lim_{n\to\infty}\frac{\sigma_n^2(f t^\al)}{\sigma_n^2(f)}
=c^{-\al} G(t^\al),
\eeq
which is equivalent to \eqref{e5.6}.

\noindent
{\sl Step 3.} Now we prove the relation \eqref{e5.6}  in the case where $h(\lambda)\equiv 1$ and $t(\lambda)$ is an arbitrary trigonometric polynomial.
Denoting $\check{t}(\la):=t^2(\la)$, we can write
\beq
\label{e5.45}
|t(\la)|^\al=\left(t^2(\la)\right)^{^\al/2}=\left(\check{t}(\la)\right)^{^\al/2},
\q \la\in\Lambda.
\eeq
Since $\check{t}(\la)$ is a nonnegative trigonometric polynomial,
according to Step 2, for $\check{t}(\la)$ the relation \eqref{e5.6}
is satisfied, that is, we have

\bea
\nonumber
\lim_{n\to\infty}\frac{\sigma_n^2(f\check{t}^{\al/2})}{\sigma_n^2(f)}= G(\check{t}^{\al/2}),
\eea
and, in view of \eqref{e5.45}, we get
\bea
\label{e5.46}
\lim_{n\to\infty}\frac{\sigma_n^2(f|t|^{\al})}{\sigma_n^2(f)}= G(|t|^{\al}),
\eea
and the result follows.

\noindent
{\sl Step 4.} Finally, we prove the relation \eqref{e5.6}, and thus the theorem,
in the general case, that is, when $h(\la)$ is an arbitrary function from the class $B_+^-$ and $t$ is an arbitrary trigonometric polynomial.
By Step 3, we have $g_1:=|t|^\al\in\mathcal{M}_f$ and $fg_1=f|t|^\al\in\mathcal{F}$. Also, according to Theorem B with $fg_1$
as $f$, the function $g_2:=h$ belongs to the class $\mathcal{M}_{fg_1}$.
Thus, the functions $g_1$ and $g_2$ satisfy the conditions of Proposition
\ref{ppg}, and hence
$g:=g_1g_2=h|t|^{\al}\in\mathcal{M}_f$ and $fg=fh|t|^{\al}\in\mathcal{F}$.
This completes the proof of Theorem \ref{T1}.
\end{proof}

\begin{proof}[Proof of Corollary \ref{ec5.2}]
For $m=1$ the corollary coincides with Theorem \ref{T1}.
The result then follows from inductive arguments and Proposition
\ref{ppg}.

\end{proof}
\begin{proof}[Proof of Theorem \ref{T2}]
Observe first that by Proposition  \ref{p4.2}(c) we have $G(t^{-\al})>0$.
Next, since $k+1-\al>0$ (recall that $k$ is the integer part of $\al$),
from the equality
$ft^{-\al}=ft^{-(k+1)}t^{(k+1-\al)}$, we have $ft^{-\al}\in L^1(\Lambda)$.
Denote $g_1:=t^{-(k+1)}$ and $g_2:=t^{(k+1-\al)}$. It follows from
Lemma \ref{l5-0} that the function $g_1$ satisfies the conditions:
$g_1\in\mathcal{M}_f$ and $fg_1\in\mathcal{F}$. Also, according to Theorem
\ref{T1}, we have  $g_2\in\mathcal{M}_{fg_1}$.
Therefore, by Proposition \ref{ppg}, we have
$g_1g_2=t^{-\al}\in\mathcal{M}_f$ and $fg_1g_2=ft^{-\al}\in\mathcal{F}$.
Applying Proposition \ref{ppg} now to the functions $t^{-\al}$ and $h$,
we conclude that
$g:=ht^{-\al}\in\mathcal{M}_f$ and $fg\in\mathcal{F}$.
Theorem \ref{T2} is proved.
\end{proof}


\begin{proof}[Proof of Theorem \ref{T2c}]
We first prove assertion a), that is, that the function $h:=\hat f/f$
belongs to the class $B_+^-$. To do this we denote $c:=\min_{1\leq j\leq k}c_j$,
where $c_j$ is as in \eqref{s3.8}. Then, in view of \eqref{s3.8},
for $\vs=c/2$ and each $j=1,2,\ldots, k$, there is a number $\de_j=\de_j(\vs)>0$
such that $|h(\la)-c_j|<\vs$ for all $\la\in O_{\de_j}(\la_j)$, or equivalently
\beq
\label{n4.49}
c_j-c/2<h(\la)<c_j+c/2 \q \text{for all} \q \la\in O_{\de_j}(\la_j), \q j=1,2,\ldots, k.
\eeq
Denote $C:=\max_{1\leq j\leq k}c_j$, $\la_0=-\pi$, $\De\la_j=\la_j-\la_{j-1}$,
and $\de:=\min_{1\leq j\leq k}\{\de_j, \De\la_j/3\}$.
Then, in view of the inequalities $c\leq c_j\leq C$ and $\de_j\leq \de$, from
\eqref{n4.49} we obtain
\beq
\label{n4.50}
c/2<h(\la)<3C/2 \q \text{for all} \q \la\in\cup_{j=1}^kO_\de(\la_j).
\eeq
Thus, the function $h(\la)$ is bounded away from zero and infinity in the set
$\cup_{j=1}^kO_\de(\la_j)$.
On the other hand, according to the conditions 1) and 4) of the theorem,
positive constants $m$ and $M$ can be found to satisfy
\beq
\label{n4.51}
f(\la)\leq M, \,\, \hat f(\la)\leq M \q \text{for all} \q \la\in \Lambda.
\eeq
and
\beq
\label{n4.52}
f(\la)\geq m, \,\, \hat f(\la)\geq m \q \text{for all} \q
\la\in \Lambda\setminus \cup_{j=1}^kO_\de(\la_j).
\eeq
Therefore
\beq
\label{n4.53}
m/M\leq h(\la)\leq M/m \q \text{for all} \q \la\in \Lambda\setminus \cup_{j=1}^kO_\de(\la_j).
\eeq
Hence, denoting $\hat m:=\min\{c/2, m/M\}$ and $\hat M:=\max\{3C/2, M/m\}$,
in view of \eqref{n4.50} and \eqref{n4.53} we obtain
\beq
\nonumber
\hat m<h(\la)<\hat M \q \text{for all} \q \la\in \Lambda.
\eeq
Thus, the function  $h(\la)$ ia bounded away from zero and infinity.
Therefore, $h(\la)$, being a ratio of two functions from the class $B^-$,
by Proposition \ref{p3.3} c), also belongs to the class $B^-$. Therefore,
$h(\la)\in B_+^-$, and hence $G(h)>0$.

To prove assertion b), assume that the process with spectral density $f(\la)$
is nondeterministic, that is, $G(f)>0$. Then taking into account that  $\hat f(\la)= f(\la)h(\la)$,
we have $G(\hat f)=G(f)G(h)>0$, implying that the process with spectral density
$\hat f(\la)$ is also nondeterministic. So, in view of \eqref{c013}, we have
\bea
\label{n4.54}
\lim_{n\to\f}\si_n^2(\hat f)=2\pi G(\hat f)\q \text{and} \q
\lim_{n\to\f}\si_n^2(f)=2\pi G(f).
\eea
From \eqref{n4.54} and Proposition \ref{p4.2}(a) we easily obtain the relation \eqref{g3.6e}.
Besides, from the first relation in \eqref{n4.54} and Proposition \ref{p4.1}(b),
we infer that the sequence $\{\si_n(\hat f)\}$ is weakly varying.
This completes the proof of assertion b) of the theorem.

The assertion c) of the theorem immediately follows from Theorem \ref{T1}.
Indeed, if $f\in\mathcal{F}$, then applying Theorem \ref{T1} with $t(\la)\equiv1$ (that is, $g(\la)=h(\la)$), we conclude that $\hat f= fh$
is the spectral density of a deterministic process with a weakly varying
sequence $\{\si_n(\hat f)\}$ of prediction errors and the relation \eqref{g3.6e} holds.
This completes the proof of Theorem \ref{T2c}.
\end{proof}

\begin{proof}[Proof of Theorem \ref{T1b}]
Let the polynomial $q(\la)$ in \eqref{e5.5b} be of degree $m\geq 1$, and let
$\la_i\in(-\pi,\pi]$
be the zeros of $q(\la)$ of multiplicities $m_i\in\mathbb{N}$
$(i=1,2,\ldots, k, k\in\mathbb{N})$, respectively. Then by the Fundamental Theorem of Algebra we
can write
$$q(\la)=c_0(\la-\la_1)^{m_1}(\la-\la_2)^{m_2}\cdots(\la-\la_k)^{m_k}\hat q(\la),\q \la\in\Lambda,$$
where $\hat q(\la)$ consists of a product of linear binomial factors with zeros outside $(-\pi,\pi]$
and quadratic trinomial factors, which are positive on $\mathbb{R}$, implying that
$\hat q(\la)\in B_+^-$.

Consider the trigonometric polynomial:
$$t(\la)=\sin^{m_1}(\la-\la_1)\sin^{m_2}(\la-\la_2)\cdots\sin^{m_k}(\la-\la_k)
,\q \la\in\Lambda,$$
and the function
\beq
\label{n4.55}
\hat h(\la):=\frac{|q(\la)|^\al}{|t(\la)|^\al},\q \la\in\Lambda.
\eeq
From the relation $\sin(\la-\la_i)\sim(\la-\la_i)$ as $\la\to\la_i$ it follows that with some positive constants $c_i$, 
$|q(\la)|^\al\sim c_i|t(\la)|^\al$ as $\la\to\la_i$ $(i=1,2,\ldots, k)$.
Hence the functions $|q(\la)|^\al$ and $|t(\la)|^\al$, with regard to their continuity,
satisfy the conditions of Theorem \ref{T2c}. Therefore,
$\hat h(\la)\in B_+^-$ and, according to Proposition \ref{p3.3} d),
we have $h(\la) \hat h(\la)\in B_+^-$.

Now we can apply Theorem \ref{T2} to conclude the function $g(\la)$ in \eqref{e5.5b} 
is the spectral density of a nondeterministic process and to obtain
\beaa
\nonumber
\lim_{n\to\infty}\frac{\sigma_n^2(fg)}{\sigma_n^2(f)}
&=&\lim_{n\to\infty}\frac{\sigma_n^2(fh|q|^\al)}{\sigma_n^2(f)}
=\lim_{n\to\infty}\frac{\sigma_n^2(fh\hat h|t|^\al)}{\sigma_n^2(f)}\\
&=&G(h\hat h|t|^\al)=G(h|q|^\al)=G(g)>0.
\eeaa
Here the first and the fifth relations follow from \eqref{e5.5b},
the second and the fourth relations follow from \eqref{n4.55},
and the third relation follows from Theorem \ref{T2}.

This completes the proof of Theorem \ref{T1b}.
\end{proof}

\s{Examples}
\label{Examples}

In this section we discuss examples demonstrating the obtained results.

\begin{exa}
\label{ex1}
{\rm Let the function $g(\la)$ ($\la\in\Lambda$) be as in \eqref{e5.5a} with $h(\la)=1$
and $t(\la)=\sin(\la-\la_0)$, where $\la_0$ is an arbitrary point from
$[-\pi,\pi]$, that is, $g(\la)=|\sin(\la-\la_0)|^{\al}$, $\al\in\mathbb{R}$.
Then, according to Example 4.4 of Babayan et al. \cite{BGT},
for the geometric mean of $\sin^{2}(\la-\la_0)$ we have
\beq
\label{k742g}
G(\sin^{2}(\la-\la_0))=\frac14.
\eeq
According to Proposition \ref{p4.2}(a) and \eqref{k742g},
for the geometric mean of $g(\la)$, we obtain
\beq
\label{g4}
G(g)=G(|\sin(\la-\la_0)|^{\al})=G\left(\left(\sin^{2}(\la-\la_0)\right)^{\al/2}\right)=
G^{\al/2}(\sin^2(\la-\la_0))=\frac1{2^\al},
\eeq
and in view of \eqref{e5.6}, we get
\beq
\label{k76}
\nonumber
\lim_{n\to\f}\frac{\si^2_{n}(fg)}{\si^2_{n}(f)} =G(g)=\frac1{2^\al}.
\eeq
Thus, multiplying the spectral density $f(\la)$ by the function
$g(\la)=|\sin(\la-\la_0)|^{\al}$
yields a $2^\al$-fold asymptotic reduction of the prediction error.}
\end{exa}

\begin{exa}
\label{ex2}
{\rm Let the function $g(\la)$ be as in \eqref{e5.5b} with $h(\la)=1$
and $q(\la)=\la$, that is, $g(\la)=|\la|^\al$, $\al\in\mathbb{R}$.
By direct calculation we obtain
\beaa
\ln G(g)=\frac1{2\pi}\inl\ln |\la|^\al\,d\la =\frac\al\pi\int_0^\pi\ln\la\,d\la
=\al\ln(\pi/e).
\eeaa
Therefore
\beaa
G(g)= 
\left(\pi/ e\right)^\al \approx (1.156)^\al,
\eeaa
and in view of \eqref{e5.6}, we get
\beq
\label{k72a}
\nonumber
\lim_{n\to\f}\frac{\si^2_{n}(fg)}{\si^2_{n}(f)} =G(g)
=\left(\frac\pi e\right)^\al \approx (1.156)^\al.
\eeq
Thus, multiplying the spectral density $f(\la)$ by the function
$g(\la)=|\la|^\al$ multiplies the prediction error asymptotically
by $(\pi/e)^\al \approx (1.156)^\al$.

It follows from Proposition \ref{pp3}(d) that the same asymptotic
is true for the prediction error with spectral density
$\bar g(\la)=|\la-\la_0|^\al$, $\la_0\in [-\pi,\pi]$.}
\end{exa}

\begin{exa}
\label{ex3}
{\rm
We first analyze 
the Pollaczek-Szeg\H{o} function
$f_a(\la)$ given by \eqref{nd4} (cf.  Pollaczek \cite{Po} and Szeg\H{o} \cite{S1}). We have
\beq
\label{nd41}
f_a(\la)=\frac{2e^{2\la\varphi(\la)}e^{-\pi\varphi(\la)}}{e^{\pi\varphi(\la)}+e^{-\pi\varphi(\la)}}
=\frac{2e^{2\la\varphi(\la)}}{e^{2\pi\varphi(\la)}+1},
\q  0\leq\la\leq\pi, \q \varphi(\la):=\varphi_a(\la)=(a/2)\cot\la. 
\eeq
Observe that $\varphi(\la)\to+\f$ as $\la\to 0^+$, and we have 
\beq
\label{nd42}
\varphi(\la)\sim a/(2\la), \q e^{2\la\varphi(\la)}\sim e^a, \q
e^{2\pi\varphi(\la)}+1 \sim e^{a\pi/\la} \q {\rm as} \q \la\to0^+.
\eeq
Taking into account that $f_a(\la)$ is an even function, from \eqref{nd41}
and \eqref{nd42} we obtain the following asymptotic relation for $f_a(\la)$
in a vicinity of the point $\la=0$.
\beq
\label{nd43}
f_a(\la)\sim 2e^a\exp\left\{-{a\pi}/{|\la|}\right\}\q {\rm as}
\q \la\to0.
\eeq
Next, observe that $\varphi(\la)\to-\f$ as $\la\to \pi$, and we have 
\beq
\label{nd44}
\varphi(\la)=-\varphi(\pi-\la) \sim (-a/2)(\pi-\la), \q 2\la\varphi(\la)\sim -a\pi/(\pi-\la), \q
{\rm as} \q \la\to\pi.
\eeq
In view of \eqref{nd41} and \eqref{nd44} we obtain the following asymptotic
relation for the function $f_a(\la)$ in a vicinity of the point $\la=\pi$.
\beq
\label{nd45}
f_a(\la)\sim 2e^{2\la\varphi(\la)} \sim 2\exp\left\{-{a\pi}/{(\pi-\la)}\right\}\q {\rm as}
\q \la\to\pi.
\eeq
Putting together \eqref{nd43} and \eqref{nd45}, and taking into account
evenness of $f_a(\la)$, we conclude that
\beq \label{t2}
f_a(\la)\sim
\left \{
\begin{array}{ll}
 2e^a\exp\left\{-{a\pi}/{|\la|}\right\} & \mbox{as $\la\to0$},\\
2\exp\left\{-{a\pi}/{(\pi-|\la|)}\right\} & \mbox{as $\la\to\pm\pi$},
\end{array}
\right.
\eeq
Thus, the function $f_a(\la)$ is positive everywhere except for points
$\la=0, \pm\pi,$ and has a very high order of contact with zero at these points, so that Szeg\H{o}'s condition \eqref{S} is violated implying that $G(f_a)=0$. Also, observe that $f_a(\la)$ is infinitely differentiable at
all points of the segment $[-\pi,\pi]$ including the points $\la=0, \pm\pi,$
and attains it maximum value of 1 at the points $\pm\pi/2$.
For some specific values of the parameter $a$ the graph of the function $f_a(\la)$ is represented in Figure 1a).


\begin{figure}[ht]%
\centering
\includegraphics[width=0.8\textwidth]{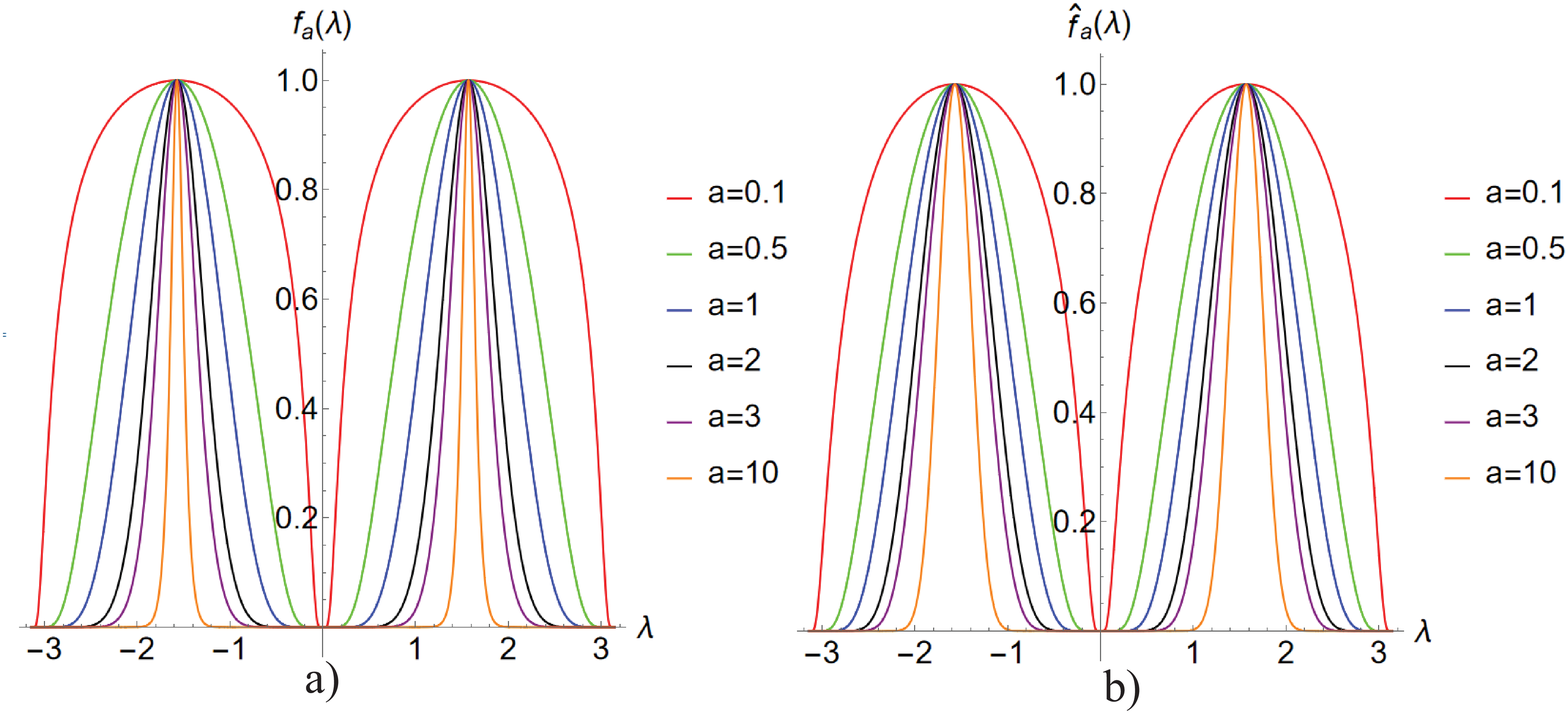}
\caption{a) Graph of the function $f_a(\lambda)$.
b) Graph of the function $\hat f_a(\lambda)$.}
\label{fig1}
\end{figure}



For $a>0$ and  $\la\in [-\pi,\pi]$, consider the pair of functions
$\hat f_1(\la)$ and $\hat f_2(\la)$ defined by formulas:
\beq
\label{ff}
\hat f_1(\la):=\exp\left\{-{a\pi}/{|\la|}\right\}, \q
\hat f_2(\la):=\exp\left\{-{a\pi}/{(\pi-|\la|)}\right\}.
\eeq

Observe that the function $\hat f_1(\la)$ is positive everywhere except
for point $\la=0$ at which it has the same order of contact with zero
as $f_a(\la)$, and hence $G(\hat f_1)=0$. Also, $\hat f_1(\la)$ is infinitely differentiable at
all points of the segment $[-\pi,\pi]$  except for the points
$\la=\pm\pi,$ where it attains its maximum value equal to $e^{-a}$.
As for the function $\hat f_2(\la)$, it is positive everywhere except
for points $\la=\pm\pi,$ at which it has the same order of contact with
zero as $f_a(\la)$,
and hence $G(\hat f_2)=0$. Also, $\hat f_2(\la)$ is infinitely differentiable
at all points of the segment $[-\pi,\pi]$  except for the point $\la=0$,
where it attains its maximum value equal to $e^{-a}$.
For some specific values of the parameter $a$ the graphs of functions
$\hat f_1(\la)$ and $\hat f_2(\la)$ are represented in Figure 2.

\begin{figure}[ht]%
\centering
\includegraphics[width=0.7\textwidth]{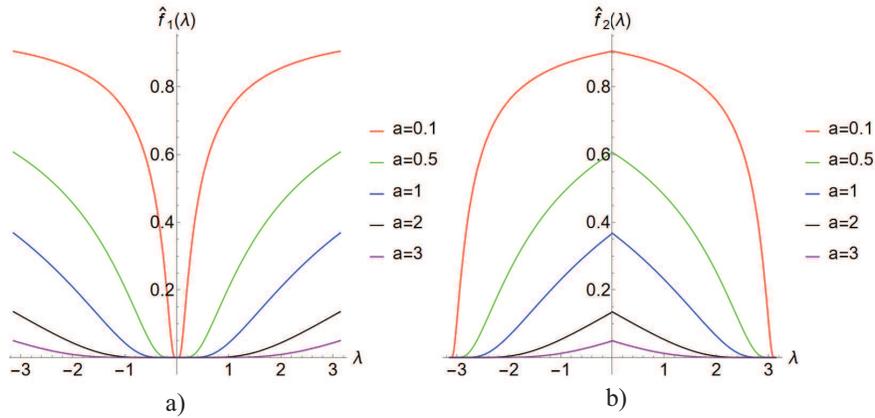}
\caption{a) Graph of the function $\hat f_1(\lambda)$.
b) Graph of the function $\hat f_2(\lambda)$.} 
\label{fig2}
\end{figure}



Denote by $\hat f_a(\la)$ the product of functions $\hat f_1(\la)$
and $\hat f_2(\la)$ defined in \eqref{ff} and normalized by the factor $e^{4a}$:
\beq
\label{snd5}
\hat f_a(\la):=e^{4a}\hat f_1(\la)\hat f_2(\la)
=e^{4a}\exp\left\{-{a\pi^2}/{(|\la|(\pi-|\la|))}\right\},
\eeq
and observe that $\hat f_a(\la)$ behaves similar to $f_a(\la)$.
Indeed, the function $\hat f_a(\la)$ also is positive everywhere except for points $\la=0, \pm\pi,$ it is infinitely differentiable at all points of the segment $[-\pi,\pi]$ including the points $\la=0, \pm\pi,$ and attains it maximum value of 1 at the points $\pm\pi/2$.
Also, in view of \eqref{t2} and \eqref{snd5}, at points $\la=0, \pm\pi$ the function $\hat f_a(\la)$ has the same order of zeros as $f_a(\la)$, and hence $G(\hat f_a)=0$.
Thus, the process $X(t)$ with spectral density $\hat f_a(\la)$ is deterministic.
For some specific values of the parameter $a$ the graph of the function $\hat f_a(\la)$ is represented in Figure 1b).


The functions $f_a(\la)$ and $\hat f_a(\la)$ defined
by \eqref{nd4} and \eqref{snd5}, respectively, satisfy the conditions of
Theorem \ref{T2c}. 
Therefore, we have (see \eqref{g3.6e})
\beq
\label{snd6}
\lim_{n\to\f}\frac{\si_n^2(\hat f_a)}{\si_n^2(f_a)}=G(\hat f_a/f_a):
=\hat C(a)>0.
\eeq
In view of \eqref{nd6} and \eqref{snd6} we have
\beq
\label{snd7}
\si^2_n(\hat f_a)\sim C(a) \cd n^{-a}
\q {\rm as} \q n\to\f.
\eeq
where
\beq
\label{snd8}
C(a):=\frac{\Gamma^2\left(({a+1)}/2\right)}{\pi 2^{2-a}}\cd \hat C(a). 
\eeq
}
\end{exa}

The values of the constants $\hat C(a)$ and $C(a)$ for some specific values of the parameter $a$ are given in Table 1.

\vskip 3mm
\begin{center} {\bf Table 1.} The values of constants $\hat C(a)$ and $C(a)$\\
\begin{tabular}{llll} \toprule
    {$a$} & {$\frac{\Gamma^2\left((a+1)/2\right)}{\pi 2^{2-a}}$} & {$\hat C(a)$} & {$C(a)$} \\ \midrule
    0.1  & 0.223  & 0.797   & 0.178  \\
    0.5  & 0.169  & 1.113   & 0.188  \\
    1.0  & 0.159  & 2.545   & 0.406  \\
    1.5  & 0.185  & 6.446   & 1.193  \\ 
    2.0  & 0.250  & 16.830  & 4.214   \\
    3.0  & 0.637  & 119.220 & 76.379  \\
    3.3  & 0.902   & 215.715 & 194.656  \\ \midrule
    3.4  & 1.020   & 263.173 & 268.375   \\
    5.0  & 10.186  & 6128.990& 62429.000  \\
    10.0   & 223256 & 1.104 $\cdot 10^8$ & 2.428 $\cdot 10^{13}$  \\
    \bottomrule
\end{tabular}
\end{center}




Now we compare the prediction errors $\si^2_n(\hat f_1)$ and
$\si^2_n(\hat f_2)$ with $\si^2_n(f_a)$.
To this end, observe first that
the function
$g_1(\la):=f_a(\la)/\hat f_1(\la)$ has a very high order of contact
with zero at points $\la=\pm\pi,$ so that Szeg\H{o}'s condition \eqref{S} is violated implying that $G(g_1)=0$. Besides, the function $g_1(\la)$ is continuous
on $[-\pi,\pi]$, and hence $g_1\in B^-$. Therefore, according to Corollary
4.5 of Babayan et al. \cite{BGT}, we have
\beq
\label{ff1}
\si^2_n( f_a)= o\left(\si^2_n(\hat f_1)\right)
\q {\rm as} \q n\to\f.
\eeq
Similar arguments applied to the function $f_2(\la)$ yield
\beq
\label{ff2}
\si^2_n( f_a)= o\left(\si^2_n(\hat f_2)\right)
\q {\rm as} \q n\to\f.
\eeq
The relations  \eqref{ff1} and \eqref{ff2} show that the rate of
convergence to zero of the prediction errors $\si^2_n(\hat f_1)$
and $\si^2_n(\hat f_2)$ is less than the one for $\si^2_n(f_a)$, that is, the power rate of convergence $n^{-a}$ (see \eqref{snd7}).
Thus, the rate of convergence $n^{-a}$ is due to the joint contribution
of all zeros $\la=0,\pm\pi$ of the function $f_a(\la)$, 
whereas
each of these zeros separately does not guarantee the rate of convergence $n^{-a}$.

\end{document}